\newtheorem*{theorem}{Theorem}
\newtheorem{thrm}{Theorem} 
\newtheorem{cor}[thrm]{Corollary}
\newtheorem{lem}[thrm]{Lemma}
\newtheorem{prop}[thrm]{Proposition}
\theoremstyle{definition}
\newtheorem{defn}[thrm]{Definition}
\crefname{thrm}{Theorem}{Theorems}
\crefname{lem}{Lemma}{Lemmas}
\crefname{cor}{Corollary}{Corollaries}
\crefname{prop}{Proposition}{Propositions}
\crefname{defn}{Definition}{Definitions}
\crefname{exm}{Example}{Examples}
\crefname{rem}{Remark}{Remarks}
\crefname{section}{Section}{Sections}
\crefname{equation}{\unskip}{\unskip}
\crefname{enumi}{\unskip}{\unskip}
\renewcommand{\iff}{\Leftrightarrow}
\newcommand{\impl}{\Rightarrow}
\newcommand{\af}{\alpha}
\newcommand{\bt}{\beta}
\newcommand{\gm}{\gamma}
\newcommand{\Dl}{\Delta}
\newcommand{\G}{\Gamma}
\newcommand{\vf}{\varphi}
\newcommand{\C}{\mathbb{C}}
\newcommand{\Z}{\mathbb{Z}}
\newcommand{\sst}{\subseteq}
\newcommand{\m}{{}^{-1}}
\begin{document}

	\noindent{\Large  
		Transposed Poisson structures on Witt type algebras} 

 \bigskip

 \bigskip

 {\bf
	Ivan Kaygorodov\footnote{CMA-UBI, Universidade da Beira Interior, Covilh\~{a}, Portugal; \    kaygorodov.ivan@gmail.com}	\&
Mykola Khrypchenko\footnote{ Departamento de Matem\'atica, Universidade Federal de Santa Catarina,     Brazil; \ nskhripchenko@gmail.com}
	}
	\

	\medskip

	\ 
	
	\noindent {\bf Abstract:} {\it 	
We describe $\frac{1}{2}$-derivations, and hence transposed Poisson algebra structures, on Witt type Lie algebras $V(f)$, where $f:\G\to\C$ is non-trivial and $f(0)=0$. More precisely, if $|f(\G)|\ge 4$, then all the transposed Poisson algebra structures on $V(f)$ are mutations of the group algebra structure $(V(f),\cdot)$ on $V(f)$. If $|f(\G)|=3$, then we obtain the direct sum of $3$ subspaces of $V(f)$, corresponding to cosets of $\G_0$ in $\G$, with multiplications being different mutations of $\cdot$. The case $|f(\G)|=2$ is more complicated, but also deals with certain mutations of $\cdot$. As a consequence, new Lie algebras that admit non-trivial ${\rm Hom}$-Lie algebra structures are found. 	
	}

	\
	
	\noindent {\bf Keywords}: 
	{\it 	Transposed Poisson algebra, Witt type algebra, Lie algebra, $\delta$-derivation,  ${\rm Hom}$-Lie algebra.
	}

	\noindent {\bf MSC2020}: primary 17A30; secondary 17B40, 17B61, 17B63.  
	
	\tableofcontents

%	%	\newpage
	\section*{Introduction} 
	Poisson algebras originated from the Poisson geometry in the 1970s and have shown their importance in several areas of mathematics and physics, such as Poisson manifolds, algebraic geometry, operads, quantization theory, quantum groups, and classical and quantum mechanics. One of the popular topics in the theory of Poisson algebras is the study of all possible Poisson algebra structures with fixed Lie or associative part~\cite{jawo,said2,YYZ07,kk21}.
	Recently, Bai, Bai, Guo, and Wu~\cite{bai20} have introduced a dual notion of the Poisson algebra, called \textit{transposed Poisson algebra}, by exchanging the roles of the two binary operations in the Leibniz rule defining the Poisson algebra. 
	They have shown that a transposed Poisson algebra defined this way not only shares common properties of a Poisson algebra, including the closedness under tensor products and the Koszul self-duality as an operad, but also admits a rich class of identities. More significantly, a transposed Poisson algebra naturally arises from a Novikov-Poisson algebra by taking the commutator Lie algebra of the Novikov algebra.
	Thanks to \cite{bfk22}, 
	any unital transposed Poisson algebra is
	a particular case of a ``contact bracket'' algebra 
	and a quasi-Poisson algebra.
	Later, in a recent paper by Ferreira, Kaygorodov, and  Lopatkin
	a relation between $\frac{1}{2}$-derivations of Lie algebras and 
	transposed Poisson algebras has been established \cite{FKL}. 	These ideas were used to describe all transposed Poisson structures 
	on  Witt and Virasoro algebras in  \cite{FKL};
	on   twisted Heisenberg-Virasoro,   Schr\"odinger-Virasoro  and  
	extended Schr\"odinger-Virasoro algebras in \cite{yh21};
	on   oscillator algebras in  \cite{bfk22}. 
	It was proved that each complex finite-dimensional solvable Lie algebra has a non-trivial transposed Poisson structure \cite{klv22}.
		The ${\rm Hom}$- and ${\rm BiHom}$-versions of transposed Poisson algebras and
		transposed Poisson bialgebras have been considered in \cite{hom, bihom, bl22}.		For the list of actual open questions on transposed Poisson algebras see \cite{bfk22}.
	
	The first non-trivial example of a transposed Poisson algebra was constructed on the Witt algebra with the multiplication 
	law $[e_i,e_j]=(i-j)e_{i+j}$ for $i,j \in \mathbb  Z$ (see, \cite{FKL}).
	This attracted certain interest to the description of transposed Poisson structures on Lie algebras related to the Witt algebra.
	Thus, all transposed Poisson structures   on 
	the Virasoro algebra  \cite{FKL},
	Block-type Lie algebras   and  Block-type Lie superalgebras  \cite{kk22}
	have been described.
	In the last years the concept of Witt type Lie algebra has been enlarged and generalized by various authors, such as Kawamoto,  Osborn, Đoković, Zhao,  Xu,  Passman,  Jordan, etc. (see, for example, \cite{witt} and references therein).
	In the present paper, we study transposed Poisson structures on the class of
	Witt type Lie algebras $V(f)$ defined by Yu in \cite{Yu97}.
    They are playing a critical role in the classification of simple Lie algebras on a lattice \cite{KM13}. 
	Lie admissible structures on Witt type algebras have been described in \cite{BC11}.

	 The first main part of our work is devoted to a description of $\frac{1}{2}$-derivations of $V(f)$, where $f:\G\to\C$ is nontrivial and $f(0)=0$. This is done in \cref{sec-halfder}. We first prove several lemmas that hold for an arbitrary $f$. Then, based on these lemmas and \cite[Lemmas 4.4 and 4.6]{Yu97}, we split our description into $3$ different cases: $|f(\G)|\ge 4$, $|f(\G)|=3$ and $|f(\G)|=2$, leading to 
 \cref{half-der-|f(G)|>=4,half-der-|f(G)|=3,half-der-|f(G)|=2}. We then proceed with the study of transposed Poisson algebra structures $(V(f),*)$ on $V(f)$ in \cref{sec-tp}. It is also divided into $3$ parts corresponding to $|f(\G)|\ge 4$, $|f(\G)|=3$ and $|f(\G)|=2$. The full description of $(V(f),*)$ is given in \cref{tp-4,tp-3,tp-2}, respectively. We gather these results into one main theorem.
\begin{theorem}
Let $\G$ be an abelian group and $f:\G\to\C$ a function satisfying $f(0)=0$ and
\begin{align*}
    \big(f(\af+\bt)-f(\af)-f(\bt)\big)\big(f(\af)-f(\bt)\big)=0.
\end{align*}
Let $\G_0=f^{-1}(0)$.
\begin{enumerate}
    \item If $|f(\G)|\ge 4$, then transposed Poisson algebra structures on $V(f)$ are exactly mutations $(V(f),\cdot_b)$ of $e_\af\cdot e_\bt=e_{\af+\bt}$, where $\af,\bt\in\G$. 

    \item If $|f(\G)|=3$, then transposed Poisson algebra structures on $V(f)$ are exactly of the form $\bigoplus_{i=0}^2 (V(f)_i,\cdot_{b_i})$ for some $b_i\in V(f)_j$ with $i+j\equiv 0 \,\mathrm{mod}\,3$. Here $V(f)_i$, $i=0,1,2$, are the subspaces of $V(f)$ corresponding to the cosets of $\G_0$ in $\G$.

    \item If $|f(\G)|=2$, then transposed Poisson algebra structures on $V(f)$ are exactly of the form 
    \begin{align*}
        e_\af*e_\bt=
		\begin{cases}
			e_\af\cdot_b e_\bt, & \af,\bt\in\G_0,\\
			e_\af\cdot_{b^0} e_\bt, & \af\in\G_0,\bt\not\in\G_0\text{ or }\af\not\in\G_0,\bt\in\G_0,\\
			0, & \af,\bt\not\in\G_0,
		\end{cases}
    \end{align*}
    where $b=\sum_{\gm\in\G}b_\gm e_\gm\in V(f)$ and $b^0=\sum_{\gm\in\G_0}b_\gm e_\gm\in V(f)$.  
\end{enumerate}
\end{theorem}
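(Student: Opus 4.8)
The plan is to reduce the description of transposed Poisson algebra structures on $V(f)$ to the already-announced classification of $\frac12$-derivations of $V(f)$, using the correspondence from \cite{FKL} between $\frac12$-derivations and transposed Poisson multiplications. Recall that if $(L,[\cdot,\cdot],*)$ is a transposed Poisson algebra, then for each fixed $x\in L$ the left multiplication operator $L_x=x*(\cdot)$ is a $\frac12$-derivation of the Lie algebra $(L,[\cdot,\cdot])$; conversely, a commutative product $*$ for which all left multiplications are $\frac12$-derivations and which satisfies the transposed Leibniz identity gives a transposed Poisson structure. So the first step is to invoke \cref{half-der-|f(G)|>=4,half-der-|f(G)|=3,half-der-|f(G)|=2} to know exactly which operators on $V(f)$ are $\frac12$-derivations in each of the three regimes $|f(\G)|\ge 4$, $|f(\G)|=3$, $|f(\G)|=2$. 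In the $|f(\G)|\ge4$ case one expects every $\frac12$-derivation to be, up to scalar, a ``mutation-type'' operator $e_\af\mapsto e_{\af}\cdot b$ built from the group-algebra product, which already pins down the shape of $L_{e_\af}$ for every basis element $e_\af$.

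Second, I would run through the three cases \cref{tp-4,tp-3,tp-2} in turn. In each case the structure theorem for $\frac12$-derivations forces $L_{e_\af}(e_\bt)=e_\af*e_\bt$ to lie in the span of the appropriate mutation operators, so $e_\af*e_\bt$ is determined by finitely many ``coefficient'' functions of $\af$ (in the $|f(\G)|\ge4$ case, essentially a single element $b\in V(f)$). One then imposes two constraints: commutativity $e_\af*e_\bt=e_\bt*e_\af$, and the transposed Leibniz identity
\begin{align*}
2 f\text{-weight terms}:\quad z*[x,y]=[z*x,y]+[x,z*y],
\end{align*}
i.e. the compatibility of $*$ with the bracket $[e_\af,e_\bt]=(f(\af)-f(\bt))e_{\af+\bt}$. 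Matching coefficients of basis elements in these identities should collapse the free parameters: in the $|f(\G)|\ge4$ case to a single $b\in V(f)$, giving the mutation $(V(f),\cdot_b)$; in the $|f(\G)|=3$ case, the bracket respects the $\Z/3$-grading by cosets of $\G_0$, and the Leibniz identity forces $*$ to decouple into three separate mutations $\cdot_{b_i}$ with the degree condition $i+j\equiv0\pmod 3$; in the $|f(\G)|=2$ case, the more degenerate $\frac12$-derivation structure allows an extra product only between $\G_0$ and its complementary coset, giving the three-line formula with the truncation $b^0$ of $b$ to $\G_0$.

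Third — and this is where most of the genuine work sits — I would verify that each candidate family actually \emph{is} a transposed Poisson algebra, i.e. the converse direction: that the listed multiplications satisfy both commutativity and the transposed Leibniz rule together with the Lie bracket of $V(f)$. For mutations $\cdot_b$ of an associative (here, group-algebra) product this is a known and essentially formal computation, but in the $|f(\G)|=3$ and especially the $|f(\G)|=2$ cases one has to check that gluing different mutations across cosets still satisfies Leibniz; this amounts to a careful bookkeeping of which coset each product lands in and using $f(0)=0$ together with the defining identity $(f(\af+\bt)-f(\af)-f(\bt))(f(\af)-f(\bt))=0$ to kill the cross terms. The main obstacle I anticipate is precisely this case analysis in $|f(\G)|=2$: there the $\frac12$-derivation space is largest relative to the available rigidity, so the Leibniz identity has to do the most work to cut the parameter space down to the stated $(b,b^0)$ form, and one must be careful that the three defining ``pieces'' of $*$ are mutually consistent on triples of elements drawn from different cosets.
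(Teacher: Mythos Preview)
Your strategy is the paper's strategy: invoke \cref{half-der-|f(G)|>=4,half-der-|f(G)|=3,half-der-|f(G)|=2} to know the shape of every left multiplication $e_\bt*(-)$, use commutativity to pin the parameters to a single $b$ (or the $b_i$, or the pair $b,b^0$), and then verify the candidates. But you have misidentified where the real work sits in the verification. The transposed Leibniz identity \cref{Trans-Leibniz} is \emph{exactly} the statement that every left multiplication is a $\frac12$-derivation; this is the content of \cref{glavlem}, and since the candidate products are constructed precisely so that each $e_\bt*(-)$ matches the form in the relevant $\frac12$-derivation proposition, the Leibniz condition is automatic and there is nothing further to check there. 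What \cref{glavlem} also requires, and what you never mention as a separate obligation, is that $*$ be \emph{associative}. For $|f(\G)|\ge 4$ this is free because mutations of an associative product are associative, and for $|f(\G)|=3$ the direct sum of associative algebras is associative; but for $|f(\G)|=2$ the three-line formula \emph{glues two different mutations} $\cdot_b$ and $\cdot_{b^0}$ together with a zero block, and associativity of the glued product is a genuine computation. The paper's \cref{e_af*a_bt-tp} spends eight sub-cases on $(e_\af*e_\bt)*e_\gm=e_\af*(e_\bt*e_\gm)$, running over all placements of $\af,\bt,\gm$ in or out of $\G_0$; it is there, not in any Leibniz bookkeeping, that the truncation $b\mapsto b^0$ is forced and justified.

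So: keep your outline, but in the converse direction replace ``check that gluing different mutations still satisfies Leibniz'' by ``check that gluing different mutations is still associative''. (Two minor slips: the transposed Leibniz rule has a factor $2$ on the left, and the bracket is $[e_\af,e_\bt]=(f(\bt)-f(\af))e_{\af+\bt}$, not $(f(\af)-f(\bt))$.)
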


%	\newpage	
	\section{Preliminaries}\label{prelim}
 	
	All the algebras below will be over $\mathbb C$ and all the linear maps will be $\mathbb C$-linear, unless otherwise stated.

	\subsection{Transposed Poisson algebras}	
	\begin{defn}\label{tpa}
		Let ${\mathfrak L}$ be a vector space equipped with two nonzero bilinear operations $\cdot$ and $[\cdot,\cdot].$
		The triple $({\mathfrak L},\cdot,[\cdot,\cdot])$ is called a \textit{transposed Poisson algebra} if $({\mathfrak L},\cdot)$ is a commutative associative algebra and
		$({\mathfrak L},[\cdot,\cdot])$ is a Lie algebra that satisfies the following compatibility condition
		\begin{align}\label{Trans-Leibniz}
		2z\cdot [x,y]=[z\cdot x,y]+[x,z\cdot y].
		\end{align}
	\end{defn}
	
	Transposed Poisson algebras were first introduced in a paper by Bai, Bai, Guo, and Wu \cite{bai20}.
	
	\begin{defn}\label{tp-structures}
		Let $({\mathfrak L},[\cdot,\cdot])$ be a Lie algebra. A \textit{transposed Poisson algebra structure} on $({\mathfrak L},[\cdot,\cdot])$ is a commutative associative multiplication $\cdot$ on $\mathfrak L$ which makes $({\mathfrak L},\cdot,[\cdot,\cdot])$ a transposed Poisson algebra.
	\end{defn}

	\begin{defn}\label{12der}
		Let $({\mathfrak L}, [\cdot,\cdot])$ be an algebra and $\varphi:\mathfrak L\to\mathfrak L$ a linear map.
		Then $\varphi$ is a \textit{$\frac{1}{2}$-derivation} if it satisfies
		\begin{align}\label{vf(xy)=half(vf(x)y+xvf(y))}
		\varphi \big([x,y]\big)= \frac{1}{2} \big([\varphi(x),y]+ [x, \varphi(y)] \big).
		\end{align}
	\end{defn}
	Observe that $\frac{1}{2}$-derivations are a particular case of $\delta$-derivations introduced by Filippov in \cite{fil1}
	(see also \cite{k12,z10} and references therein). The space of all $\frac{1}{2}$-derivations of an algebra $\mathfrak L$ will be denoted by $\Dl(\mathfrak L).$

	\cref{tpa,12der} immediately imply the following key Lemma.
	\begin{lem}\label{glavlem}
		Let $({\mathfrak L},[\cdot,\cdot])$ be a Lie algebra and $\cdot$ a new binary (bilinear) operation on ${\mathfrak L}$. Then $({\mathfrak L},\cdot,[\cdot,\cdot])$ is a transposed Poisson algebra 
		if and only if $\cdot$ is commutative and associative and for every $z\in{\mathfrak L}$ the multiplication by $z$ in $({\mathfrak L},\cdot)$ is a $\frac{1}{2}$-derivation of $({\mathfrak L}, [\cdot,\cdot]).$
	\end{lem}
	
	The basic example of a $\frac{1}{2}$-derivation is the multiplication by a field element.
	Such $\frac{1}{2}$-derivations will be called \textit{trivial}.
	
	\begin{thrm}\label{princth}
		Let ${\mathfrak L}$ be a Lie algebra without non-trivial $\frac{1}{2}$-derivations.
		Then all transposed Poisson algebra structures on ${\mathfrak L}$ are trivial.
	\end{thrm}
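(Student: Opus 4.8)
The plan is to derive this immediately from \cref{glavlem}. Suppose $\cdot$ is any transposed Poisson algebra structure on $\mathfrak L$. By \cref{glavlem}, for each fixed $z\in\mathfrak L$ the left multiplication operator $L_z\colon x\mapsto z\cdot x$ is a $\frac{1}{2}$-derivation of $(\mathfrak L,[\cdot,\cdot])$. Since $\mathfrak L$ has no non-trivial $\frac{1}{2}$-derivations, each $L_z$ must be trivial, i.e.\ there is a scalar $\psi(z)\in\C$ with $z\cdot x=\psi(z)\,x$ for all $x\in\mathfrak L$.

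Next I would exploit commutativity of $\cdot$. For arbitrary $x,z\in\mathfrak L$,
\begin{align*}
\psi(z)\,x = z\cdot x = x\cdot z = \psi(x)\,z.
\end{align*}
Whenever $x$ and $z$ are linearly independent this forces $\psi(x)=\psi(z)=0$. Because the Lie bracket on $\mathfrak L$ is non-zero (part of \cref{tpa}), $\dim\mathfrak L\ge 2$, so every element of $\mathfrak L$ sits in some linearly independent pair; hence $\psi\equiv 0$ and therefore $z\cdot x=0$ for all $x,z\in\mathfrak L$. Thus the multiplication $\cdot$ is zero, i.e.\ the transposed Poisson structure is trivial, as claimed.

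I do not anticipate a real obstacle: the argument is a direct consequence of \cref{glavlem} together with the elementary fact that a family of scalar left multiplication operators on a commutative algebra of dimension at least two must vanish. The only point meriting a word of care is the degenerate low-dimensional case, which is excluded by the standing convention that both operations of a transposed Poisson algebra are non-zero; if one prefers, the conclusion can be phrased as ``$\mathfrak L$ admits no non-trivial transposed Poisson structure.''
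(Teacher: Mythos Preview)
Your argument is correct and is exactly the approach the paper has in mind: the theorem is stated in the paper without proof, immediately after \cref{glavlem}, as an essentially immediate consequence thereof (the result is quoted from \cite{FKL}). Your handling of the low-dimensional edge case via the standing convention in \cref{tpa} is appropriate; alternatively, note that if $[\cdot,\cdot]=0$ and $\dim\mathfrak L\ge 2$ then non-scalar linear maps give non-trivial $\frac{1}{2}$-derivations, so the hypothesis already forces $\dim\mathfrak L\le 1$ or $[\cdot,\cdot]\ne 0$.
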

	
	Let us recall the definition of ${\rm Hom}$-structures on Lie algebras.
	\begin{defn}
		Let $({\mathfrak L}, [\cdot,\cdot])$ be a Lie algebra and $\varphi$ be a linear map.
		Then $({\mathfrak L}, [\cdot,\cdot], \varphi)$ is a ${\rm Hom}$-Lie structure on $({\mathfrak L}, [\cdot,\cdot])$ if 
		\[
		[\varphi(x),[y,z]]+[\varphi(y),[z,y]]+[\varphi(z),[x,y]]=0.
		\]
	\end{defn}
Filippov proved that each nonzero $\delta$-derivation ($\delta\neq0,1$) of a Lie algebra gives a non-trivial ${\rm Hom}$-Lie algebra structure \cite[Theorem 1]{fil1}.

	%\subsection{Isomorphic transposed Poisson (super)algebra structures}
	
	% Let $\cdot$ be a transposed Poisson algebra structure on a Lie algebra $({\mathfrak L}, [\cdot,\cdot])$. 
	% Then any automorphism $\phi$ of $({\mathfrak L}, [\cdot,\cdot])$ induces a new transposed Poisson algebra structure $*$ on $({\mathfrak L}, [\cdot,\cdot])$ given by
	% \begin{align*}
	% x*y=\phi\big(\phi^{-1}(x)\cdot\phi^{-1}(x)\big),\ \ x,y\in{\mathfrak L}.
	% \end{align*}
	% Clearly, $\phi$ is an isomorphism of transposed Poisson algebras $({\mathfrak L},\cdot,[\cdot,\cdot])$ and $({\mathfrak L},*,[\cdot,\cdot])$. 

	\subsection{Witt type algebras}

	\begin{defn}
		Let $\G$ be an abelian group and $f:\G\to\C$ a function. The \textit{Witt type algebra} $V(f)$ is a vector space with basis $\{e_\af\}_{\af\in\G}$ and multiplication
		\begin{align}\label{[e_af_e_bt]=(f(bt)-f(af))e_(af+bt)}
			[e_\af,e_\bt]=(f(\bt)-f(\af))e_{\af+\bt}.
		\end{align}
	\end{defn}
	 Replacing $f$ by $f-f(0)$, we obtain
	\begin{align}\label{f(0)=zero}
		f(0)=0
	\end{align}
	without changing \cref{[e_af_e_bt]=(f(bt)-f(af))e_(af+bt)}.	Then $V(f)$ is a Lie algebra if and only if
	\begin{align}\label{(f(af+bt)-f(af)-f(bt))(f(af)-f(bt))=0}
		\big(f(\af+\bt)-f(\af)-f(\bt)\big)\big(f(\af)-f(\bt)\big)=0
	\end{align}
	for all $\af,\bt\in\G$. Hence, if $f(\af)\ne f(\bt)$, then $f(\af+\bt)=f(\af)+f(\bt)$. In particular, $f(-\af)\ne f(\af)$ implies $f(-\af)=-f(\af)$. Thus, either $f(-\af)=f(\af)$ or $f(-\af)=-f(\af)$.
	
	In what follows, we assume \cref{f(0)=zero,(f(af+bt)-f(af)-f(bt))(f(af)-f(bt))=0}. It is also natural to assume that $|f(\G)|\ge 2$, since otherwise $V(f)$ is abelian. Let $\G_0=f\m(0)$. The next result is due to Yu~\cite{Yu97}.
	% \begin{lem}\label{properties-f}
	% 	The function $f$ satisfies the following properties
	% 	\begin{enumerate}
	% 		\item if $f(\af)\ne f(\bt)$, then $f(\af+\bt)=f(\af)+f(\bt)$;
	% 		\item for any $\af\in\G$ either $f(-\af)=f(\af)$ or $f(-\af)=-f(\af)$.
	% 	\end{enumerate}
	% \end{lem}
	\begin{lem}\label{G_0-subgroup}
			The set $\G_0$ is a subgroup of $\G$, and $\af-\bt\in\G_0\impl f(\af)=f(\bt)$ for all $\af,\bt\in\G$.
	\end{lem}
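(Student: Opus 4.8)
The plan is to establish the two assertions of \cref{G_0-subgroup} separately, both relying only on the defining identity \cref{(f(af+bt)-f(af)-f(bt))(f(af)-f(bt))=0} together with the normalization $f(0)=0$.

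First I would verify the second assertion, namely that $\af-\bt\in\G_0$ implies $f(\af)=f(\bt)$, because it is the more delicate part and the closure of $\G_0$ will follow from it together with $0\in\G_0$. Write $\gm=\af-\bt$, so that $f(\gm)=0$ and $\af=\bt+\gm$. Suppose, for contradiction, that $f(\af)\ne f(\bt)$. Applying \cref{(f(af+bt)-f(af)-f(bt))(f(af)-f(bt))=0} to the pair $(\bt,\gm)$: since $f(\bt+\gm)=f(\af)\ne f(\bt)$, the second factor $f(\bt)-f(\gm)=f(\bt)$ could still vanish, so I need to be careful. The clean way is to observe that when $f(\bt)\ne f(\gm)=0$, the identity forces $f(\af)=f(\bt+\gm)=f(\bt)+f(\gm)=f(\bt)$, contradicting $f(\af)\ne f(\bt)$. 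It remains to handle the case $f(\bt)=0$, i.e. $\bt\in\G_0$; but then I want to conclude $f(\af)=0$ as well. For this I use the pair $(\af,-\gm)$: we have $\af+(-\gm)=\bt\in\G_0$, and if $f(\af)\ne f(-\gm)$ the identity gives $f(\bt)=f(\af)+f(-\gm)$; combined with $f(-\gm)\in\{f(\gm),-f(\gm)\}=\{0\}$ (using the dichotomy $f(-\delta)=\pm f(\delta)$ noted just before the lemma) this yields $0=f(\bt)=f(\af)$, a contradiction to $f(\af)\ne 0=f(\bt)$; and if $f(\af)=f(-\gm)=0$ we are done directly. So in all cases $f(\af)=f(\bt)$.

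Next, closure of $\G_0$: it is nonempty since $f(0)=0$, so $0\in\G_0$. If $\af,\bt\in\G_0$ then $\af-(-\bt)=\af+\bt$; to show $\af+\bt\in\G_0$ it suffices by the second assertion (applied with the pair $\af+\bt$ and $0$, noting $(\af+\bt)-0=\af+\bt$) to show $f(\af+\bt)=0$. If $f(\af)\ne f(\bt)$ this is impossible since both are $0$; hence $f(\af)=f(\bt)=0$, and then either the identity's first factor vanishes, giving $f(\af+\bt)=f(\af)+f(\bt)=0$, or its second factor vanishes, which it does automatically here, so I instead argue: take the pair $(\af,\bt)$; if $f(\af+\bt)\ne 0$ then since $f(\af)=f(\bt)$ the second factor is zero and the identity is vacuous, so I should instead use that $f(\af)\ne f(\af+\bt)$ would let me apply \cref{(f(af+bt)-f(af)-f(bt))(f(af)-f(bt))=0} to the pair $(\af+\bt,-\af)$, whose sum is $\bt\in\G_0$: either $f(\af+\bt)=f(-\af)$, and since $f(-\af)\in\{0\}$ we get $f(\af+\bt)=0$; or $f(\af+\bt)\ne f(-\af)$ and the identity gives $f(\bt)=f(\af+\bt)+f(-\af)=f(\af+\bt)$, whence $f(\af+\bt)=0$. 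Either way $\af+\bt\in\G_0$. Finally $\af\in\G_0\impl -\af\in\G_0$ by the dichotomy $f(-\af)=\pm f(\af)$, which forces $f(-\af)=0$.

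The main obstacle is the bookkeeping around the degenerate branches of the product identity: whenever the factor $f(\af)-f(\bt)$ happens to vanish, \cref{(f(af+bt)-f(af)-f(bt))(f(af)-f(bt))=0} gives no information, so one must re-apply the identity to a shifted pair (replacing one argument by its negative, using $f(-\delta)=\pm f(\delta)$) to relaunch the additivity conclusion. Once this pattern is recognized, each case reduces to a short chain of substitutions, and no heavier machinery is needed.
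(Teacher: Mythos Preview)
The paper does not supply its own proof of this lemma; it simply attributes the result to Yu~\cite{Yu97} and moves on. So there is no argument in the paper to compare your proposal against.

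Your argument is correct. The only blemish is expository: when you begin the closure step you write that ``to show $\af+\bt\in\G_0$ it suffices by the second assertion (applied with the pair $\af+\bt$ and $0$) to show $f(\af+\bt)=0$''. This invokes the implication in the wrong direction, and in any case is unnecessary: by definition $\G_0=f^{-1}(0)$, so $\af+\bt\in\G_0$ is \emph{equivalent} to $f(\af+\bt)=0$. The subsequent computation with the pair $(\af+\bt,-\af)$, using $f(-\af)=\pm f(\af)=0$, is exactly what is needed and is sound. Likewise, your treatment of the implication $\af-\bt\in\G_0\Rightarrow f(\af)=f(\bt)$ via the two cases $f(\bt)\ne 0$ and $f(\bt)=0$, re-applying the identity to a shifted pair in the degenerate branch, is correct and is the standard way to handle this.
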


	% \begin{defn}
	% 	We say that $f$ is \textit{even} (resp. \textit{odd}) if $f(-\af)=f(\af)$ (resp. $f(-\af)=-f(\af)$) for all $\af\in\G$.
	% \end{defn}

	% \begin{cor}
	% 	The function $f$ is either even, or odd.
	% \end{cor}
	% \begin{proof}
	% 	In view of \cref{properties-G_0-and-tl-G}\cref{tl-G=G_0-or-tl-G=G} we have two cases. If $\tl\G=\G$, then $f(-\af)=f(\af)$ for all $\af\in\G$. Otherwise $\tl\G=\G_0$, so $f(-\af)=-f(\af)$ for all $\af\not\in\G_0$. But $f(-\af)=-f(\af)=0$ for all $\af\in\G_0$, whence $f(-\af)=-f(\af)$ for all $\af\in\G$.
	% \end{proof}

		\section{Transposed Poisson structures on Witt type algebras}

	\subsection{$\frac 12$-derivations of $V(f)$}\label{sec-halfder}
	
	Observe that $V(f)=\oplus_{\af\in\G} \C e_\af$ is a $\G$-grading. So, any linear map $\vf:V(f)\to V(f)$ decomposes as
	\begin{align*}%\label{vf=sum-vf_rs}
	\vf=\sum_{\gm\in\G}\vf_\gm,
	\end{align*}
	where $\vf_\gm:V(f)\to V(f)$ is a linear map such that $\vf_\gm(\C e_\af)\sst \C e_{\af+\gm}$ for all $\af\in\G$. It follows that $\vf$ is a $\frac 12$-derivation of $V(f)$ if and only if $\vf_\gm$ is a $\frac 12$-derivation of $V(f)$ for all $\gm\in\G$. Let us write 
	\begin{align}\label{vf_af(e_bt)=d_af(bt)e_(af+bt)}
	\vf_\gm(e_\af)=d_\gm(\af)e_{\af+\gm},
	\end{align}
	where $d_\gm:\G\to\C$.
	
	\begin{lem}\label{conditions-on-d}
		Let $\vf_\gm:V(f)\to V(f)$ be a linear map satisfying \cref{vf_af(e_bt)=d_af(bt)e_(af+bt)}. Then $\vf_\gm\in \Dl (V(f))$ if and only if 
		\begin{align}\label{one-half-der-in-terms-of-d_af}
		2(f(\bt)-f(\af))d_\gm(\af+\bt)=(f(\bt)-f(\af+\gm))d_\gm(\af)+(f(\bt+\gm)-f(\af))d_\gm(\bt).
		\end{align}
	\end{lem}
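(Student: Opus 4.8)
The plan is to expand the $\frac12$-derivation condition \cref{vf(xy)=half(vf(x)y+xvf(y))} with $x=e_\af$, $y=e_\bt$, and $\vf=\vf_\gm$ acting via \cref{vf_af(e_bt)=d_af(bt)e_(af+bt)}, then compare coefficients of the single basis vector appearing on both sides. First I would compute the left-hand side: $\vf_\gm([e_\af,e_\bt]) = \vf_\gm\big((f(\bt)-f(\af))e_{\af+\bt}\big) = (f(\bt)-f(\af))d_\gm(\af+\bt)e_{\af+\bt+\gm}$, using \cref{[e_af_e_bt]=(f(bt)-f(af))e_(af+bt)}. Next I would compute each term on the right: $[\vf_\gm(e_\af),e_\bt] = d_\gm(\af)[e_{\af+\gm},e_\bt] = d_\gm(\af)(f(\bt)-f(\af+\gm))e_{\af+\bt+\gm}$, and similarly $[e_\af,\vf_\gm(e_\bt)] = d_\gm(\bt)(f(\bt+\gm)-f(\af))e_{\af+\bt+\gm}$. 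Since all three terms are scalar multiples of the \emph{same} basis element $e_{\af+\bt+\gm}$, equating coefficients and multiplying through by $2$ yields precisely \cref{one-half-der-in-terms-of-d_af}.

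For the converse direction, I would observe that the computation above is reversible: if the scalar identity \cref{one-half-der-in-terms-of-d_af} holds for all $\af,\bt\in\G$, then \cref{vf(xy)=half(vf(x)y+xvf(y))} holds on all pairs of basis vectors $e_\af,e_\bt$, and hence on all of $V(f)\times V(f)$ by bilinearity of both sides of \cref{vf(xy)=half(vf(x)y+xvf(y))}. Thus $\vf_\gm\in\Dl(V(f))$. This establishes the equivalence.

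I do not expect any serious obstacle here: the proof is a direct unwinding of definitions, and the key structural fact that makes it clean is that $V(f)$ is $\G$-graded, so each homogeneous component $\vf_\gm$ sends $e_\af$ to a multiple of $e_{\af+\gm}$, and the bracket of two basis elements is again a (scalar multiple of a) basis element. Consequently no cancellation across distinct basis vectors can occur, and the functional equation \cref{one-half-der-in-terms-of-d_af} captures the condition exactly term by term. The only minor point worth stating explicitly is that it suffices to check \cref{vf(xy)=half(vf(x)y+xvf(y))} on basis elements, which follows from bilinearity.
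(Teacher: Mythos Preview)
Your proof is correct and follows exactly the same approach as the paper: both sides of the $\frac12$-derivation identity are expanded on basis vectors $e_\af,e_\bt$ using \cref{[e_af_e_bt]=(f(bt)-f(af))e_(af+bt)} and \cref{vf_af(e_bt)=d_af(bt)e_(af+bt)}, and the resulting scalar multiples of $e_{\af+\bt+\gm}$ are compared. Your write-up is in fact slightly more thorough than the paper's, since you make the converse direction and the reduction to basis vectors via bilinearity explicit.
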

	\begin{proof}
		We have
		\begin{align*}
		2\vf_\gm([e_\af,e_\bt])&= 2\vf_\gm((f(\bt)-f(\af))e_{\af+\bt})=2(f(\bt)-f(\af))d_\gm(\af+\bt)e_{\af+\bt+\gm}
		\end{align*}
		and
		\begin{align*}
		[\vf_\gm(e_\af),e_\bt]+[e_\af,\vf_\gm(e_\bt)]&=[d_\gm(\af)e_{\af+\gm},e_\bt]+[e_\af,d_\gm(\bt)e_{\bt+\gm}]\\
		&=(f(\bt)-f(\af+\gm))d_\gm(\af)e_{\af+\bt+\gm}+(f(\bt+\gm)-f(\af))d_\gm(\bt)e_{\af+\bt+\gm}.
		\end{align*}
	\end{proof}
	
	\begin{lem}\label{d_gm(af)=d_gm(0)}
		Let $\vf_\gm\in\Dl(V(f))$ satisfying \cref{vf_af(e_bt)=d_af(bt)e_(af+bt)}. If $f(\af)\ne f(\gm)$, then $d_\gm(\af)=d_\gm(0)$. If $f(\af)=f(\gm)$ and $f(\af+\gm)\ne 2 f(\gm)$, then $d_\gm(\af)=0$.
	\end{lem}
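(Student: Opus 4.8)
The plan is to specialize the defining identity \cref{one-half-der-in-terms-of-d_af} from \cref{conditions-on-d} to well-chosen pairs $(\af,\bt)$ and read off the claimed values of $d_\gm(\af)$. The two sub-statements require two different specializations, so I would treat them in turn.

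\textbf{First statement.} Suppose $f(\af)\ne f(\gm)$. The natural move is to put $\bt=\gm$ in \cref{one-half-der-in-terms-of-d_af}, which kills the last term (its coefficient is $f(\bt+\gm)-f(\af)$, but we want the term with $d_\gm(\gm)$... actually $\bt=\gm$ gives $d_\gm(\gm)$, not obviously zero). Let me instead put $\bt=0$: then \cref{one-half-der-in-terms-of-d_af} becomes, using $f(0)=0$,
\begin{align*}
2(0-f(\af))d_\gm(\af)=(0-f(\af+\gm))d_\gm(\af)+(f(\gm)-f(\af))d_\gm(0),
\end{align*}
i.e.\ $\bigl(f(\af+\gm)-2f(\af)\bigr)d_\gm(\af)=\bigl(f(\gm)-f(\af)\bigr)d_\gm(0)$. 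Since $f(\af)\ne f(\gm)$ the Lie-algebra condition \cref{(f(af+bt)-f(af)-f(bt))(f(af)-f(bt))=0} forces $f(\af+\gm)=f(\af)+f(\gm)$, so the left-hand coefficient is $f(\gm)-f(\af)\ne 0$, and dividing gives $d_\gm(\af)=d_\gm(0)$. That settles the first claim.

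\textbf{Second statement.} Now suppose $f(\af)=f(\gm)$ and $f(\af+\gm)\ne 2f(\gm)$. Running the same substitution $\bt=0$ in \cref{one-half-der-in-terms-of-d_af} yields $\bigl(f(\af+\gm)-2f(\af)\bigr)d_\gm(\af)=\bigl(f(\gm)-f(\af)\bigr)d_\gm(0)=0$, and now the hypothesis $f(\af+\gm)\ne 2f(\gm)=2f(\af)$ makes the left coefficient nonzero, forcing $d_\gm(\af)=0$. Both halves thus come out of the single specialization $\bt=0$.

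\textbf{Main obstacle.} Honestly there isn't a serious one here — the content is entirely in choosing $\bt=0$ and invoking \cref{(f(af+bt)-f(af)-f(bt))(f(af)-f(bt))=0} to linearize $f$ on the relevant sum. The only point requiring a little care is making sure the coefficient one divides by is genuinely nonzero in each case: for the first statement this is exactly the implication $f(\af)\ne f(\gm)\Rightarrow f(\af+\gm)=f(\af)+f(\gm)$ recorded right after \cref{(f(af+bt)-f(af)-f(bt))(f(af)-f(bt))=0}, and for the second it is the standing hypothesis $f(\af+\gm)\ne 2f(\gm)$ together with $f(\af)=f(\gm)$. I would also remark that the case $f(\af)=f(\gm)$ with $f(\af+\gm)=2f(\gm)$ is deliberately left untouched, as those values of $\af$ are precisely where the $\frac12$-derivation can be nonconstant and will have to be analyzed separately in the case-by-case treatment that follows.
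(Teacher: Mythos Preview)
Your proof is correct and follows exactly the paper's approach: specialize \cref{one-half-der-in-terms-of-d_af} at $\bt=0$ to obtain $(f(\af+\gm)-2f(\af))d_\gm(\af)=(f(\gm)-f(\af))d_\gm(0)$, then split into the two cases and divide by the appropriate nonzero coefficient. The exploratory detour through $\bt=\gm$ and the closing remark are extra commentary, but the mathematical argument is identical to the paper's.
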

\begin{proof}
	Let us use \cref{one-half-der-in-terms-of-d_af} with $\bt=0$:
	\begin{align*}
		2(f(0)-f(\af))d_\gm(\af)=(f(0)-f(\af+\gm))d_\gm(\af)+(f(\gm)-f(\af))d_\gm(0).
	\end{align*}
	Then, thanks to \cref{f(0)=zero}, we have
	\begin{align}\label{(f(af+gm)-2f(af))d_gm(af)=(f(gm)-f(af))d_gm(0)}
		(f(\af+\gm)-2f(\af))d_\gm(\af)=(f(\gm)-f(\af))d_\gm(0).
	\end{align}
	If $f(\af)\ne f(\gm)$, then $f(\af+\gm)=f(\af)+f(\gm)$ by \cref{(f(af+bt)-f(af)-f(bt))(f(af)-f(bt))=0}. Then dividing both sides of \cref{(f(af+gm)-2f(af))d_gm(af)=(f(gm)-f(af))d_gm(0)} by $f(\gm)-f(\af)\ne 0$ we obtain $d_\gm(\af)=d_\gm(0)$. If $f(\af)=f(\gm)$ and $f(\af+\gm)\ne 2 f(\af)$, then \cref{(f(af+gm)-2f(af))d_gm(af)=(f(gm)-f(af))d_gm(0)} implies $d_\gm(\af)=0$.
\end{proof}

\begin{lem}\label{d_gm(af)=d_gm(0)-when-there-is-bt}
	Let $\vf_\gm\in\Dl(V(f))$ satisfying \cref{vf_af(e_bt)=d_af(bt)e_(af+bt)}. If $f(\af)=f(\gm)$, $f(\af+\gm)=2f(\gm)$ and there exists $\bt\in\G$ with $f(\bt)\not\in \{0,f(\gm),2f(\gm)\}$, then $d_\gm(\af)=d_\gm(0)$.
\end{lem}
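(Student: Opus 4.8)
The plan is to use the general constraint \cref{one-half-der-in-terms-of-d_af} with a cleverly chosen pair of arguments, exploiting the existence of the special element $\bt$ whose $f$-value avoids $\{0,f(\gm),2f(\gm)\}$. The key observation is that such a $\bt$ satisfies $f(\bt)\neq f(\af)$ (since $f(\af)=f(\gm)$) and $f(\bt)\neq f(\af+\gm)$ (since $f(\af+\gm)=2f(\gm)$ while $f(\bt)\neq 2f(\gm)$), so neither coefficient degenerates in the way that would destroy information. Moreover, because $f(\bt)\neq f(\af)$, additivity from \cref{(f(af+bt)-f(af)-f(bt))(f(af)-f(bt))=0} gives $f(\af+\bt)=f(\af)+f(\bt)=f(\gm)+f(\bt)$, and similarly I will want to control $f(\bt+\gm)$.

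First I would plug $\af$ and this $\bt$ into \cref{one-half-der-in-terms-of-d_af}, obtaining
\begin{align*}
2(f(\bt)-f(\af))d_\gm(\af+\bt)=(f(\bt)-f(\af+\gm))d_\gm(\af)+(f(\bt+\gm)-f(\af))d_\gm(\bt).
\end{align*}
Next I would use \cref{d_gm(af)=d_gm(0)} and the additivity relations to rewrite every term. Since $f(\bt)\notin\{0,f(\gm),2f(\gm)\}$, in particular $f(\bt)\neq f(\gm)$, so $d_\gm(\bt)=d_\gm(0)$ by the first part of \cref{d_gm(af)=d_gm(0)}. For $d_\gm(\af+\bt)$: we have $f(\af+\bt)=f(\gm)+f(\bt)\neq f(\gm)$ (as $f(\bt)\neq 0$), hence again by \cref{d_gm(af)=d_gm(0)} we get $d_\gm(\af+\bt)=d_\gm(0)$ — provided $f(\af+\bt)\neq f(\gm)$, which holds. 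So the only unknown left is $d_\gm(\af)$ itself. Substituting $f(\af)=f(\gm)$, $f(\af+\gm)=2f(\gm)$, $f(\af+\bt)=f(\gm)+f(\bt)$ and the value of $f(\bt+\gm)$ (which one computes from additivity if $f(\gm)\neq f(\bt)$, giving $f(\bt+\gm)=f(\bt)+f(\gm)$), the equation collapses to a linear relation between $d_\gm(\af)$ and $d_\gm(0)$ whose coefficient on $d_\gm(\af)$ is $f(\bt)-f(\af+\gm)=f(\bt)-2f(\gm)\neq 0$; solving yields $d_\gm(\af)=d_\gm(0)$.

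The main obstacle I anticipate is bookkeeping the case distinctions on $f(\bt+\gm)$ and confirming that no coefficient silently vanishes. The hypotheses on $f(\bt)$ are tailored precisely so that $f(\bt)\neq f(\gm)$ (giving additivity of $f$ on $\bt,\gm$) and $f(\bt)\neq 2f(\gm)=f(\af+\gm)$ (keeping the $d_\gm(\af)$ coefficient nonzero); I would double-check that these, together with $f(\bt)\neq 0$, make all applications of \cref{d_gm(af)=d_gm(0)} legitimate. One subtlety: to apply \cref{d_gm(af)=d_gm(0)} to an element $\eta$ one needs $f(\eta)\neq f(\gm)$, so I must verify $f(\af+\bt)=f(\gm)+f(\bt)\neq f(\gm)$, i.e. $f(\bt)\neq 0$, which is exactly part of the hypothesis. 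Once all substitutions are made, the remaining computation is a one-line algebraic simplification, so the proof is short; the care is entirely in justifying each rewrite.
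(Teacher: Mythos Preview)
Your proposal is correct and follows essentially the same argument as the paper: both plug $\af,\bt$ into \cref{one-half-der-in-terms-of-d_af}, use $f(\bt)\ne f(\gm)$ and $f(\bt)\ne f(\af)$ to get additivity for $f(\bt+\gm)$ and $f(\af+\bt)$, invoke \cref{d_gm(af)=d_gm(0)} (via $f(\bt)\ne f(\gm)$ and $f(\af+\bt)\ne f(\gm)$) to replace $d_\gm(\bt)$ and $d_\gm(\af+\bt)$ by $d_\gm(0)$, and then cancel the nonzero factor $f(\bt)-2f(\gm)$. The paper's proof is just a slightly more streamlined write-up of the same computation.
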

\begin{proof}
	It follows from \cref{one-half-der-in-terms-of-d_af} that
	\begin{align}\label{one-half-der-in-terms-of-d_af-f(af)=f(gm)-and-f(af+bt)=2f(af)}
		2(f(\bt)-f(\gm))d_\gm(\af+\bt)=(f(\bt)-2 f(\gm))d_\gm(\af)+(f(\bt+\gm)-f(\gm))d_\gm(\bt).
	\end{align}
	Since $f(\bt)\ne f(\gm)$, then $f(\bt+\gm)=f(\bt)+f(\gm)$ by \cref{(f(af+bt)-f(af)-f(bt))(f(af)-f(bt))=0}. Moreover, $d_\gm(\bt)=d_\gm(0)$ by \cref{d_gm(af)=d_gm(0)}. Now, $f(\bt)\ne f(\af)$ implies $f(\af+\bt)=f(\af)+f(\bt)$, which is different from $f(\af)=f(\gm)$, because $f(\bt)\ne 0$. Hence, $d_\gm(\af+\bt)=d_\gm(0)$ by \cref{d_gm(af)=d_gm(0)}. Thus, \cref{one-half-der-in-terms-of-d_af-f(af)=f(gm)-and-f(af+bt)=2f(af)} becomes
	\begin{align*}
		(f(\bt)-2 f(\gm))(d_\gm(\af)-d_\gm(0))=0.
	\end{align*}
	Finally, using $f(\bt)\ne 2f(\gm)$, we get the result.
\end{proof}

\begin{lem}\label{d_gm(af)=d_gm(0)-if-gm-in-G_0}
	Let $|f(\G)|\ge 2$ and $\vf_\gm\in\Dl(V(f))$ satisfying \cref{vf_af(e_bt)=d_af(bt)e_(af+bt)}. If $\gm\in\G_0$, then $d_\gm(\af)=d_\gm(0)$ for all $\af\in\G$.
\end{lem}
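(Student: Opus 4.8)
The plan is to specialize the master identity \cref{one-half-der-in-terms-of-d_af} to the situation $\gm\in\G_0$ and see that it collapses dramatically. Since $\gm\in\G_0$ means $f(\gm)=0$, and since $(\af+\gm)-\af=\gm\in\G_0$ for every $\af$, \cref{G_0-subgroup} gives $f(\af+\gm)=f(\af)$ and likewise $f(\bt+\gm)=f(\bt)$ for all $\af,\bt\in\G$. Substituting this into \cref{one-half-der-in-terms-of-d_af}, every $\gm$-shift disappears and the identity becomes
\begin{align*}
(f(\bt)-f(\af))\big(2d_\gm(\af+\bt)-d_\gm(\af)-d_\gm(\bt)\big)=0,
\end{align*}
so that $f(\af)\ne f(\bt)$ forces $2d_\gm(\af+\bt)=d_\gm(\af)+d_\gm(\bt)$.

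Next I would split on whether $\af\in\G_0$. If $\af\notin\G_0$, then $f(\af)\ne 0=f(\gm)$, so $d_\gm(\af)=d_\gm(0)$ is immediate from \cref{d_gm(af)=d_gm(0)}. Suppose now $\af\in\G_0$. Since $|f(\G)|\ge 2$ there is $\sg\in\G$ with $f(\sg)\ne 0$; then $(\af+\sg)-\sg=\af\in\G_0$, so $f(\af+\sg)=f(\sg)\ne 0$ by \cref{G_0-subgroup}, i.e.\ both $\sg$ and $\af+\sg$ lie outside $\G_0$. Hence $d_\gm(\sg)=d_\gm(\af+\sg)=d_\gm(0)$ by the case just treated (equivalently, by \cref{d_gm(af)=d_gm(0)}). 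Finally, applying the additive relation above to the pair $\af,\sg$, which is legitimate because $f(\af)=0\ne f(\sg)$, we get $2d_\gm(0)=2d_\gm(\af+\sg)=d_\gm(\af)+d_\gm(\sg)=d_\gm(\af)+d_\gm(0)$, whence $d_\gm(\af)=d_\gm(0)$.

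There is essentially no serious obstacle here: the only point requiring a genuine argument is the case $\af\in\G_0$, where $f(\af)=f(\gm)=0$ and the earlier lemmas do not apply directly. The trick is to borrow an auxiliary element $\sg$ outside $\G_0$ — which exists precisely because $|f(\G)|\ge 2$ — and to use that $\G_0$ is a subgroup so that $\af+\sg$ is again outside $\G_0$, allowing the collapsed identity to relate $d_\gm(\af)$ to values of $d_\gm$ already known to equal $d_\gm(0)$.
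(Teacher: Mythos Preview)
Your proof is correct and follows essentially the same strategy as the paper: handle $\af\notin\G_0$ directly via \cref{d_gm(af)=d_gm(0)}, and for $\af\in\G_0$ pick an auxiliary element outside $\G_0$ and use the master identity to relate $d_\gm(\af)$ to values already known to equal $d_\gm(0)$. The only cosmetic difference is that the paper packages the auxiliary-element step into \cref{d_gm(af)=d_gm(0)-when-there-is-bt} (applied with $\{0,f(\gm),2f(\gm)\}=\{0\}$), whereas you first specialize \cref{one-half-der-in-terms-of-d_af} using $f(\af+\gm)=f(\af)$, $f(\bt+\gm)=f(\bt)$ and then do the same computation directly; the underlying argument is identical.
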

\begin{proof}
	If $f(\af)\ne 0$, then $d_\gm(\af)=d_\gm(0)$ by \cref{d_gm(af)=d_gm(0)}. If $f(\af)=0$, then $f(\af+\gm)=0=2f(\gm)$ by \cref{G_0-subgroup}. But $\{0,f(\gm),2f(\gm)\}=\{0\}$ and $|f(\G)|\ge 2$, so there exists $\bt\in\G$ with $f(\bt)\not\in \{0,f(\gm),2f(\gm)\}$. Hence, $d_\gm(\af)=d_\gm(0)$ thanks to \cref{d_gm(af)=d_gm(0)-when-there-is-bt}. 
\end{proof}

\begin{lem}\label{vf(e_af)=ae_(af+gm)-gm-in-G_0}
	For any fixed $\gm\in\G_0$ and $a\in\C$ the linear map $\vf:V(f)\to V(f)$ given by
	\begin{align*}
		\vf(e_\af)=ae_{\af+\gm}
	\end{align*}
	is a $\frac 12$-derivation of $V(f)$.
\end{lem}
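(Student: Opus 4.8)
The plan is to reduce everything to the criterion of \cref{conditions-on-d}. The map $\vf$ in the statement is already homogeneous of degree $\gm$, i.e. it equals its own component $\vf_\gm$ in the decomposition $\vf=\sum_{\gm\in\G}\vf_\gm$, and it has the shape \cref{vf_af(e_bt)=d_af(bt)e_(af+bt)} with $d_\gm$ the constant function $d_\gm(\af)=a$ for all $\af\in\G$. By \cref{conditions-on-d} it is therefore enough to check that identity \cref{one-half-der-in-terms-of-d_af} holds for this particular $d_\gm$ and all $\af,\bt\in\G$.

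The key (and only) point is to use the hypothesis $\gm\in\G_0$. Since $\G_0$ is a subgroup of $\G$ by \cref{G_0-subgroup}, for every $\af\in\G$ we have $(\af+\gm)-\af=\gm\in\G_0$, and the implication $\af-\bt\in\G_0\impl f(\af)=f(\bt)$ from \cref{G_0-subgroup} yields $f(\af+\gm)=f(\af)$; similarly $f(\bt+\gm)=f(\bt)$ for every $\bt\in\G$. Substituting $d_\gm(\af)=d_\gm(\bt)=d_\gm(\af+\bt)=a$ together with these two equalities into the right-hand side of \cref{one-half-der-in-terms-of-d_af} transforms it into $(f(\bt)-f(\af))a+(f(\bt)-f(\af))a=2(f(\bt)-f(\af))a$, which is exactly the left-hand side. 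Hence \cref{one-half-der-in-terms-of-d_af} holds identically, so $\vf\in\Dl(V(f))$.

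As this amounts to a single substitution, there is no genuine obstacle; the one thing worth emphasising is that the membership $\gm\in\G_0$ is used precisely to make the two ``shifted'' terms $f(\af+\gm)$ and $f(\bt+\gm)$ collapse to $f(\af)$ and $f(\bt)$ — without it the constant map $e_\af\mapsto ae_{\af+\gm}$ need not be a $\frac12$-derivation. One could equally well present the verification directly on \cref{vf(xy)=half(vf(x)y+xvf(y))} using \cref{[e_af_e_bt]=(f(bt)-f(af))e_(af+bt)}, but routing through \cref{conditions-on-d} avoids rewriting the same computation.
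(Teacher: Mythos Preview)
Your proof is correct and follows the same overall strategy as the paper: identify $\vf$ with $\vf_\gm$ for constant $d_\gm\equiv a$ and verify \cref{one-half-der-in-terms-of-d_af}. The one difference is in execution: the paper splits into four cases according to whether each of $\af,\bt$ lies in $\G_0$, computing $f(\af+\gm)$ and $f(\bt+\gm)$ separately in each case (sometimes via \cref{G_0-subgroup}, sometimes via additivity from \cref{(f(af+bt)-f(af)-f(bt))(f(af)-f(bt))=0}), whereas you observe that \cref{G_0-subgroup} alone gives $f(\af+\gm)=f(\af)$ uniformly for every $\af\in\G$ once $\gm\in\G_0$. Your route is shorter and makes the role of the hypothesis $\gm\in\G_0$ more transparent; the paper's case analysis is redundant here.
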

\begin{proof}
	We have $\vf=\vf_\gm$, where $d_\gm(\af)=a$ for all $\af\in\G$. Take arbitrary $\af,\bt\in\G$. 
	
	\textit{Case 1.} $\af,\bt\in\G_0$. Then  $\af+\gm,\bt+\gm\in\G_0$ by \cref{G_0-subgroup}, so both sides of \cref{one-half-der-in-terms-of-d_af} are zero.
	
	\textit{Case 2.} $\af\in\G_0$, $\bt\not\in\G_0$. Then $f(\af+\gm)=0$, $f(\bt)\ne f(\gm)=f(\af)$, so 
    \begin{align*}
    f(\bt+\gm)=f(\bt)+f(\gm)=f(\bt).
    \end{align*} 
    It follows that both sides of \cref{one-half-der-in-terms-of-d_af} are equal to $2f(\bt)a$.
	
	\textit{Case 3.} $\af\not\in\G_0$, $\bt\in\G_0$. Similarly to Case~2, we see that both sides of \cref{one-half-der-in-terms-of-d_af} are equal to $-2f(\af)a$.
	
	\textit{Case 4.} $\af,\bt\not\in\G_0$. Then $f(\af+\gm)=f(\af)+f(\gm)=f(\af)$ and $f(\bt+\gm)=f(\bt)+f(\gm)=f(\bt)$, so both sides of \cref{one-half-der-in-terms-of-d_af} are equal to $2(f(\bt)-f(\af))a$.
	%\textit{Case 2.} $f(\gm)\ne 0$.
	%
	%\textit{Case 2.1.} $f(\af)=f(\bt)=f(\gm)$. Then  $d_\gm(\af)=d_\gm(\bt)=f(\bt)-f(\af)=0$, so both sides of \cref{one-half-der-in-terms-of-d_af} are zero.
	%
	%\textit{Case 2.2.} $f(\af)=f(\gm)$, $f(\bt)\ne f(\gm)$. Then $d_\gm(\af)=0$, $d_\gm(\bt)=a$ and $f(\bt+\gm)=f(\bt)+f(\gm)=f(\bt)+f(\af)$, so the right-hand side of \cref{one-half-der-in-terms-of-d_af} is $f(\bt)a$. Since $f(\af)\ne f(\bt)$, then $f(\af+\bt)=f(\af)+f(\bt)=f(\gm)+f(\bt)$. If $f(\bt)=0$, then $f(\af+\bt)=f(\gm)\ne 0$, so $d_\gm(\af+\bt)=0$, and both sides of \cref{one-half-der-in-terms-of-d_af} are zero. If $f(\bt)\ne 0$, then $f(\af+\bt)\ne f(\gm)$, so $d_\gm(\af+\bt)=a$, and \cref{one-half-der-in-terms-of-d_af} holds $\iff(f(\bt)-2f(\gm))a=0$.
\end{proof}	

\subsubsection{The case $|f(\G)|\ge 4$}

By \cite[Lemma 4.6]{Yu97} the function $f$ is additive in this case.

\begin{lem}\label{d_gm(af)=d_gm(0)-for-|f(G)|>=4}
	Let $|f(\G)|\ge 4$ and $\vf_\gm\in\Dl(V(f))$ satisfying \cref{vf_af(e_bt)=d_af(bt)e_(af+bt)}. Then $d_\gm(\af)=d_\gm(0)$ for all $\gm,\af\in\G$.
\end{lem}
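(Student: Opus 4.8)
The plan is to reduce the claim to the three auxiliary lemmas already established, namely \cref{d_gm(af)=d_gm(0),d_gm(af)=d_gm(0)-when-there-is-bt,d_gm(af)=d_gm(0)-if-gm-in-G_0}, exploiting the fact that for $|f(\G)|\ge 4$ the function $f$ is additive, so that $f(\af+\bt)=f(\af)+f(\bt)$ for all $\af,\bt\in\G$. Fix $\gm,\af\in\G$; I split into cases according to the values of $f(\gm)$ and $f(\af)$.

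First, if $\gm\in\G_0$, the equality $d_\gm(\af)=d_\gm(0)$ is exactly the content of \cref{d_gm(af)=d_gm(0)-if-gm-in-G_0}, so I may assume $\gm\notin\G_0$, i.e. $f(\gm)\ne 0$. If moreover $f(\af)\ne f(\gm)$, then $d_\gm(\af)=d_\gm(0)$ by \cref{d_gm(af)=d_gm(0)}. Hence the only remaining case is $f(\af)=f(\gm)$, and here additivity of $f$ gives $f(\af+\gm)=f(\af)+f(\gm)=2f(\gm)$, which is precisely the hypothesis of \cref{d_gm(af)=d_gm(0)-when-there-is-bt}, provided one can exhibit some $\bt\in\G$ with $f(\bt)\notin\{0,f(\gm),2f(\gm)\}$.

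Producing such a $\bt$ is where the cardinality assumption is used. Since we work over $\C$ and $f(\gm)\ne 0$, the three scalars $0$, $f(\gm)$, $2f(\gm)$ are pairwise distinct, so $\{0,f(\gm),2f(\gm)\}$ has exactly three elements. As $|f(\G)|\ge 4$, the image $f(\G)$ cannot be contained in this three-element set, so there is $\bt\in\G$ with $f(\bt)\notin\{0,f(\gm),2f(\gm)\}$. Then \cref{d_gm(af)=d_gm(0)-when-there-is-bt} applies and yields $d_\gm(\af)=d_\gm(0)$, completing the proof.

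I do not expect any real obstacle here: the argument is a short case analysis once additivity of $f$ (coming from $|f(\G)|\ge 4$) is available, with the only genuinely nontrivial case absorbed into \cref{d_gm(af)=d_gm(0)-when-there-is-bt}, whose extra hypothesis on the existence of a suitable $\bt$ is met exactly because $f(\G)$ has at least four elements. The one point to be careful about is the counting step verifying that $\{0,f(\gm),2f(\gm)\}$ really has three distinct elements when $f(\gm)\ne 0$, so that $|f(\G)|\ge 4$ forces $f(\G)$ to escape this set.
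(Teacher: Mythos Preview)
Your proof is correct and follows essentially the same route as the paper: use additivity of $f$ (valid when $|f(\G)|\ge 4$) to get $f(\af+\gm)=2f(\gm)$ whenever $f(\af)=f(\gm)$, then invoke \cref{d_gm(af)=d_gm(0),d_gm(af)=d_gm(0)-when-there-is-bt}, the cardinality bound guaranteeing the auxiliary $\bt$. Your separate treatment of $\gm\in\G_0$ via \cref{d_gm(af)=d_gm(0)-if-gm-in-G_0} is harmless but redundant, since in that case $\{0,f(\gm),2f(\gm)\}=\{0\}$ and the same two lemmas already apply.
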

\begin{proof}
	Since $f$ is additive, $f(\af)=f(\gm)$ implies $f(\af+\gm)=2f(\gm)$. Moreover, since $|f(\G)|\ge 4$, we can always find $\bt\in\G$ with $f(\bt)\not\in \{0,f(\gm),2f(\gm)\}$. So, the result follows from \cref{d_gm(af)=d_gm(0),d_gm(af)=d_gm(0)-when-there-is-bt}.
\end{proof}

\begin{lem}\label{vf(e_af)=ae_(af+gm)-half-der}
	Let $|f(\G)|\ge 4$. Then for any fixed $\gm\in\G$ and $a\in\C$ the linear map $\vf:V(f)\to V(f)$ given by
	\begin{align}
		\vf(e_\af)=ae_{\af+\gm}
	\end{align}
	is a $\frac 12$-derivation of $V(f)$.
\end{lem}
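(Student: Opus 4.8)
The statement claims that for $|f(\G)|\ge 4$ (hence $f$ additive by \cite[Lemma 4.6]{Yu97}), the map $\vf(e_\af)=ae_{\af+\gm}$ is a $\frac12$-derivation for \emph{every} $\gm\in\G$, not merely $\gm\in\G_0$ as in \cref{vf(e_af)=ae_(af+gm)-gm-in-G_0}. Since $\vf=\vf_\gm$ with $d_\gm(\af)=a$ constant, by \cref{conditions-on-d} it suffices to verify the single identity \cref{one-half-der-in-terms-of-d_af} for all $\af,\bt\in\G$. The plan is to substitute $d_\gm\equiv a$ into \cref{one-half-der-in-terms-of-d_af} and check that both sides coincide.

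After the substitution, \cref{one-half-der-in-terms-of-d_af} reads
\begin{align*}
2(f(\bt)-f(\af))a=(f(\bt)-f(\af+\gm))a+(f(\bt+\gm)-f(\af))a.
\end{align*}
Cancelling $a$, this is equivalent to $f(\af+\gm)+f(\bt+\gm)=f(\af)+f(\bt)+2f(\gm)$, and additivity of $f$ (established for $|f(\G)|\ge 4$) gives $f(\af+\gm)=f(\af)+f(\gm)$ and $f(\bt+\gm)=f(\bt)+f(\gm)$, so both sides agree identically. Thus the verification is a one-line computation once additivity is invoked, and there is no real obstacle here — the content of the lemma lies entirely in the additivity of $f$ in this case, which is quoted from Yu. I would simply write:

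\medskip

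\noindent\textit{Proof.} We have $\vf=\vf_\gm$ with $d_\gm(\af)=a$ for all $\af\in\G$. By \cref{conditions-on-d} it is enough to check \cref{one-half-der-in-terms-of-d_af}, which in this situation becomes the identity
\begin{align*}
2(f(\bt)-f(\af))a=(f(\bt)-f(\af+\gm))a+(f(\bt+\gm)-f(\af))a.
\end{align*}
Since $|f(\G)|\ge 4$, the function $f$ is additive by \cite[Lemma 4.6]{Yu97}, so $f(\af+\gm)=f(\af)+f(\gm)$ and $f(\bt+\gm)=f(\bt)+f(\gm)$. Substituting these into the right-hand side, it equals
\begin{align*}
(f(\bt)-f(\af)-f(\gm))a+(f(\bt)+f(\gm)-f(\af))a=2(f(\bt)-f(\af))a,
\end{align*}
which is the left-hand side. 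Hence $\vf\in\Dl(V(f))$. $\qed$

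\medskip

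\noindent Combined with \cref{d_gm(af)=d_gm(0)-for-|f(G)|>=4}, this lemma will let us conclude that every $\frac12$-derivation of $V(f)$ (for $|f(\G)|\ge 4$) is a sum of maps of the form $e_\af\mapsto a_\gm e_{\af+\gm}$, i.e.\ exactly the multiplications by elements of the group algebra $(V(f),\cdot)$, which is the input needed for the classification of transposed Poisson structures in \cref{sec-tp}.
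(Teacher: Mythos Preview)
Your proof is correct and essentially identical to the paper's: both reduce to verifying \cref{one-half-der-in-terms-of-d_af} with $d_\gm\equiv a$, invoke additivity of $f$ (from \cite[Lemma 4.6]{Yu97}) to get $f(\af+\gm)=f(\af)+f(\gm)$ and $f(\bt+\gm)=f(\bt)+f(\gm)$, and observe that both sides then equal $2(f(\bt)-f(\af))a$.
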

\begin{proof}
	We have $\vf=\vf_\gm$ with $d_\gm(\af)=a$ for all $\af\in\G$. Let us check \cref{one-half-der-in-terms-of-d_af} for arbitrary $\af,\bt\in\G$. Since $d_\gm(\af+\bt)=d_\gm(\af)=d_\gm(\bt)=a$ and $f(\af+\gm)=f(\af)+f(\gm)$, $f(\bt+\gm)=f(\bt)+f(\gm)$, both sides of \cref{one-half-der-in-terms-of-d_af} are equal to $2(f(\bt)-f(\af))a$.
\end{proof}

\begin{prop}\label{half-der-|f(G)|>=4}
	Let $|f(\G)|\ge 4$. Then $\frac 12$-derivations of $V(f)$ are of the form
	\begin{align}
		\vf(e_\af)=\sum_{\gm\in\G} a_\gm e_{\af+\gm},
	\end{align} 
where $\{a_\gm\}_{\gm\in\G}\sst\C$ is a sequence having only a finite number of non-zero elements.
\end{prop}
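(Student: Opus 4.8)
The plan is to assemble the statement directly from the $\G$-graded decomposition $\vf=\sum_{\gm\in\G}\vf_\gm$ together with the structural results already established for the homogeneous components. Recall from the discussion preceding \cref{conditions-on-d} that a linear map $\vf:V(f)\to V(f)$ lies in $\Dl(V(f))$ if and only if every component $\vf_\gm$ does, and that each $\vf_\gm$ is encoded by a function $d_\gm:\G\to\C$ via \cref{vf_af(e_bt)=d_af(bt)e_(af+bt)}.

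First I would prove necessity. Suppose $\vf\in\Dl(V(f))$; then each $\vf_\gm\in\Dl(V(f))$, so \cref{d_gm(af)=d_gm(0)-for-|f(G)|>=4} yields $d_\gm(\af)=d_\gm(0)$ for all $\af,\gm\in\G$. Setting $a_\gm:=d_\gm(0)$ we get $\vf_\gm(e_\af)=a_\gm e_{\af+\gm}$, whence $\vf(e_\af)=\sum_{\gm\in\G}a_\gm e_{\af+\gm}$, as claimed. The finiteness of the support of $\{a_\gm\}_{\gm\in\G}$ is then immediate: applying $\vf$ to $e_0$ gives $\vf(e_0)=\sum_{\gm\in\G}a_\gm e_\gm\in V(f)$, which is by definition a finite linear combination of the basis vectors $e_\gm$, so only finitely many $a_\gm$ are non-zero.

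For sufficiency, I would start with an arbitrary family $\{a_\gm\}_{\gm\in\G}\sst\C$ with finite support and first observe that $\vf(e_\af)=\sum_{\gm\in\G}a_\gm e_{\af+\gm}$ really does define a linear endomorphism of $V(f)$ — this is the only place the finiteness hypothesis enters, since it guarantees that each image is a genuine (finite) vector of $V(f)$ and that the family $(\vf_\gm)_{\gm\in\G}$ has pointwise finite sum. Writing $\vf=\sum_{\gm\in\G}\vf_\gm$ with $\vf_\gm(e_\af)=a_\gm e_{\af+\gm}$, i.e.\ $d_\gm\equiv a_\gm$, \cref{vf(e_af)=ae_(af+gm)-half-der} tells us that each $\vf_\gm$ is a $\frac12$-derivation of $V(f)$, and hence so is $\vf$ by the component criterion recalled above.

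Since all of the genuine work is contained in \cref{d_gm(af)=d_gm(0)-for-|f(G)|>=4,vf(e_af)=ae_(af+gm)-half-der} — which in turn rest on the additivity of $f$ (\cite[Lemma 4.6]{Yu97}) and on the abundance of values of $f$ forced by $|f(\G)|\ge 4$ — I do not expect any real obstacle at this stage; the only point requiring a moment's care is the bookkeeping of finite versus infinite support, ensuring that the collection of homogeneous components genuinely sums to a well-defined linear map in both directions of the equivalence.
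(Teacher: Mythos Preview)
Your proposal is correct and follows the paper's own approach: the paper's proof is simply ``A consequence of \cref{d_gm(af)=d_gm(0)-for-|f(G)|>=4,vf(e_af)=ae_(af+gm)-half-der}'', and you have spelled out exactly how these two lemmas combine via the $\G$-graded decomposition of $\vf$. The additional care you take with the finite-support bookkeeping is entirely appropriate and does not deviate from the intended argument.
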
 
\begin{proof}
	A consequence of \cref{d_gm(af)=d_gm(0)-for-|f(G)|>=4,vf(e_af)=ae_(af+gm)-half-der}.
\end{proof}

\subsubsection{The case $|f(\G)|=3$}

By \cite[Lemma 4.4]{Yu97} there are a surjective group homomorphism $\tau:\G\to\Z/3\Z$ and $c\in\C^*$ such that $f(\af)=\pm c\iff\tau(\af)=\pm 1$ for all $\af\in\G$. 

\begin{lem}\label{d_gm(af)=d_gm(0)-for-|f(G)|=3}
	Let $|f(\G)|=3$, $\gm\not\in\G_0$ and $\vf_\gm\in\Dl(V(f))$ satisfying \cref{vf_af(e_bt)=d_af(bt)e_(af+bt)}. Then $d_\gm(\af)=0$ for all $\af\in\G$.
\end{lem}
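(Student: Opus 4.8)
The plan is to work entirely from the structural description of $f$ in the case $|f(\G)|=3$, namely that $f(\af)=\pm c$ precisely when $\tau(\af)=\pm 1$ for the surjective homomorphism $\tau:\G\to\Z/3\Z$, together with the already-established \cref{d_gm(af)=d_gm(0),d_gm(af)=d_gm(0)-when-there-is-bt}. Fix $\gm\notin\G_0$, so $f(\gm)=\pm c\ne 0$; by symmetry (replacing $c$ by $-c$ corresponds to composing $\tau$ with negation) I may assume $f(\gm)=c$, i.e.\ $\tau(\gm)=1$. I would split on $\tau(\af)$.

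First, if $f(\af)\ne f(\gm)$, then \cref{d_gm(af)=d_gm(0)} gives $d_\gm(\af)=d_\gm(0)$, so it remains to pin down $d_\gm(0)$ and the values $d_\gm(\af)$ for $\af$ with $f(\af)=f(\gm)=c$ (equivalently $\tau(\af)=1$). For such $\af$ we have $\tau(\af+\gm)=2$, hence $f(\af+\gm)=-c\ne 2c=2f(\gm)$, so the second clause of \cref{d_gm(af)=d_gm(0)} applies and yields $d_\gm(\af)=0$. Thus all the ``problematic'' values vanish automatically; the only thing left is $d_\gm(0)$ itself, and also the values $d_\gm(\af)$ with $\tau(\af)=0$ and $\tau(\af)=2$, which at this point are only known to equal $d_\gm(0)$.

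To show $d_\gm(0)=0$, I would plug suitable $\af,\bt$ into the defining identity \cref{one-half-der-in-terms-of-d_af}. The natural choice is to pick $\af$ with $\tau(\af)=1$ (so $d_\gm(\af)=0$ by the previous step) and $\bt$ with $\tau(\bt)=1$ as well, or more cleverly $\bt$ with $\tau(\bt)=2$ so that $\tau(\af+\bt)=0$; then $f(\bt)=-c$, $f(\af)=c$, $f(\af+\bt)=0$, $f(\af+\gm)=-c$ (since $\tau(\af+\gm)=2$), $f(\bt+\gm)=0$ (since $\tau(\bt+\gm)=0$), and $d_\gm(\bt)=d_\gm(0)$, $d_\gm(\af)=0$, $d_\gm(\af+\bt)=d_\gm(0)$. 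Substituting, the left side is $2(-c-c)d_\gm(0)=-4c\,d_\gm(0)$ and the right side is $(-c-(-c))\cdot 0+(0-c)d_\gm(0)=-c\,d_\gm(0)$, forcing $-4c\,d_\gm(0)=-c\,d_\gm(0)$, hence $d_\gm(0)=0$ since $c\ne0$. (Any surjectivity of $\tau$ guarantees that elements $\af,\bt$ with the prescribed images exist.) Combining with the earlier reductions, $d_\gm(\af)=d_\gm(0)=0$ for every $\af$ with $\tau(\af)\ne1$, and $d_\gm(\af)=0$ for $\tau(\af)=1$, so $d_\gm\equiv 0$.

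The main obstacle, such as it is, is bookkeeping: making sure that for each residue class mod $3$ one correctly records which clause of \cref{d_gm(af)=d_gm(0)} is being invoked and that the chosen test elements $\af,\bt$ land in the intended cosets (so that the sums $\af+\bt$, $\af+\gm$, $\bt+\gm$ have the claimed $\tau$-values, using that $\tau$ is a homomorphism). One should also briefly note the symmetric case $f(\gm)=-c$, which is handled identically by swapping the roles of the residues $1$ and $2$ throughout. No single step is genuinely difficult; the whole argument is a short finite case check once the homomorphism $\tau$ is in hand.
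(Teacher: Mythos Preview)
Your proof is correct and essentially the same as the paper's: both use the second clause of \cref{d_gm(af)=d_gm(0)} to kill $d_\gm(\af)$ when $f(\af)=f(\gm)$ (since $f(\af+\gm)=-c\ne 2c$), then choose the same test pair $\af,\bt$ with $f(\af)=f(\gm)$, $f(\bt)=-f(\gm)$ in \cref{one-half-der-in-terms-of-d_af} to obtain $-4c\,d_\gm(0)=-c\,d_\gm(0)$ and hence $d_\gm(0)=0$. The only cosmetic difference is that you reduce to $f(\gm)=c$ by symmetry at the outset, whereas the paper treats the two signs as separate (but analogous) cases; also, \cref{d_gm(af)=d_gm(0)-when-there-is-bt} is not actually invoked in either argument.
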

\begin{proof}
	If $f(\af)=f(\gm)=c$, then $\tau(\af)=\tau(\gm)=1$, so $\tau(\af+\gm)=-1$, whence $f(\af+\gm)=-c$. Since $-c\ne 2c=2f(\gm)$, we conclude that $d_\gm(\af)=0$ by \cref{d_gm(af)=d_gm(0)}. 
	
	The case $f(\af)=f(\gm)=-c$ is analogous. 
	
	If $f(\af)\ne f(\gm)$, then $d_\gm(\af)=d_\gm(0)$ by \cref{d_gm(af)=d_gm(0)}, so it remains to prove that $d_\gm(0)=0$. To this end, choose $\af,\bt\in\G$ with $f(\af)=f(\gm)$ and $f(\bt)=-f(\gm)$ (this is possible in view of \cite[Lemma 4.4]{Yu97}). Then $d_\gm(\af)=0$ as we have just proved and $d_\gm(\bt)=d_\gm(0)$ by \cref{d_gm(af)=d_gm(0)}. Moreover, $f(\bt)\ne f(\af)=f(\gm)$ yields $f(\bt+\gm)=f(\bt)+f(\gm)=0$ and $f(\af+\bt)=f(\af)+f(\bt)=0\ne f(\gm)$. Therefore, $d_\gm(\af+\bt)=d_\gm(0)$. Then \cref{one-half-der-in-terms-of-d_af} takes the form $-4f(\gm)d_\gm(0)=-f(\gm)d_\gm(0)$, whence $d_\gm(0)=0$.
\end{proof}

\begin{prop}\label{half-der-|f(G)|=3}
	Let $|f(\G)|=3$. Then $\frac 12$-derivations of $V(f)$ are of the form
	\begin{align}
		\vf(e_\af)=\sum_{\gm\in\G_0} a_\gm e_{\af+\gm},
	\end{align} 
	where $\{a_\gm\}_{\gm\in\G_0}\sst\C$ is a sequence having only a finite number of non-zero elements.
\end{prop}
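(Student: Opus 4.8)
The plan is to combine the decomposition $\vf=\sum_{\gm\in\G}\vf_\gm$ with the two cases $\gm\in\G_0$ and $\gm\notin\G_0$, exactly paralleling the treatment of the case $|f(\G)|\ge 4$. First I would recall that, by \cref{d_gm(af)=d_gm(0)-for-|f(G)|=3}, whenever $\gm\notin\G_0$ we have $d_\gm\equiv 0$, so $\vf_\gm=0$; hence only the components indexed by $\gm\in\G_0$ can be nonzero. Next, for $\gm\in\G_0$, \cref{d_gm(af)=d_gm(0)-if-gm-in-G_0} (applicable since $|f(\G)|=3\ge 2$) gives $d_\gm(\af)=d_\gm(0)=:a_\gm$ for all $\af\in\G$, so $\vf_\gm(e_\af)=a_\gm e_{\af+\gm}$; and by \cref{vf(e_af)=ae_(af+gm)-gm-in-G_0} each such $\vf_\gm$ is indeed a $\frac12$-derivation. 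Since a general linear map on $V(f)$ has only finitely many nonzero graded components when applied to any fixed basis element, the family $\{a_\gm\}_{\gm\in\G_0}$ is finitely supported, and $\vf(e_\af)=\sum_{\gm\in\G_0}a_\gm e_{\af+\gm}$ as claimed. Conversely, any map of this form is a sum of the maps from \cref{vf(e_af)=ae_(af+gm)-gm-in-G_0}, hence a $\frac12$-derivation.

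So the proof is essentially a one-line assembly: it follows from \cref{d_gm(af)=d_gm(0)-for-|f(G)|=3}, \cref{d_gm(af)=d_gm(0)-if-gm-in-G_0}, and \cref{vf(e_af)=ae_(af+gm)-gm-in-G_0}. There is no real obstacle here; all the substantive work was done in the preceding lemmas, in particular the vanishing statement \cref{d_gm(af)=d_gm(0)-for-|f(G)|=3}, whose delicate point (handled there) was showing $d_\gm(0)=0$ by choosing $\af,\bt$ with $f(\af)=f(\gm)$, $f(\bt)=-f(\gm)$ and specializing the master identity \cref{one-half-der-in-terms-of-d_af}. The only care needed in writing the present proof is to note the finite-support condition, which comes from $\vf$ being a genuine linear map rather than an arbitrary formal sum.

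\begin{proof}
	If $\gm\notin\G_0$, then $\vf_\gm=0$ by \cref{d_gm(af)=d_gm(0)-for-|f(G)|=3}. If $\gm\in\G_0$, then since $|f(\G)|=3\ge 2$, \cref{d_gm(af)=d_gm(0)-if-gm-in-G_0} gives $d_\gm(\af)=d_\gm(0)$ for all $\af\in\G$; setting $a_\gm=d_\gm(0)$ we get $\vf_\gm(e_\af)=a_\gm e_{\af+\gm}$. Hence $\vf(e_\af)=\sum_{\gm\in\G_0}a_\gm e_{\af+\gm}$, and since $\vf$ is a linear map on $V(f)$, only finitely many $a_\gm$ are non-zero. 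Conversely, for any finitely supported $\{a_\gm\}_{\gm\in\G_0}\sst\C$, the map $\vf(e_\af)=\sum_{\gm\in\G_0}a_\gm e_{\af+\gm}$ is a finite sum of $\frac12$-derivations by \cref{vf(e_af)=ae_(af+gm)-gm-in-G_0}, hence itself a $\frac12$-derivation.
\end{proof}
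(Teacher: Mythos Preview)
Your proof is correct and follows exactly the paper's approach: the paper's proof reads in its entirety ``A consequence of \cref{d_gm(af)=d_gm(0)-if-gm-in-G_0,d_gm(af)=d_gm(0)-for-|f(G)|=3,vf(e_af)=ae_(af+gm)-gm-in-G_0},'' which is precisely the trio of lemmas you assemble. Your version simply spells out the assembly in more detail.
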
 
\begin{proof}
	A consequence of \cref{d_gm(af)=d_gm(0)-if-gm-in-G_0,d_gm(af)=d_gm(0)-for-|f(G)|=3,vf(e_af)=ae_(af+gm)-gm-in-G_0}.
\end{proof}

\subsubsection{The case $|f(\G)|=2$}
We have $f(\gm)=0$ for all $\gm\in\G_0$ and $f(\gm)=c\in\C^*$ for all $\gm\in\G\setminus\G_0$.

\begin{lem}\label{d_gm(af)=d_gm(0)-for-|f(G)|=2}
	Let $|f(\G)|=2$, $\gm\not\in\G_0$ and $\vf_\gm\in\Dl(V(f))$ satisfying \cref{vf_af(e_bt)=d_af(bt)e_(af+bt)}. Then 
	\begin{align*}
		d_\gm(\af)=
		\begin{cases}
			d_\gm(0), & \af\in\G_0,\\
			0, & \af\not\in\G_0.
		\end{cases}
	\end{align*}
\end{lem}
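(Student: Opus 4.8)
The plan is to reduce everything to \cref{d_gm(af)=d_gm(0)} by exploiting the fact that $|f(\G)|=2$ leaves very little room for the values of $f$. Recall that in this case $f$ takes only the two values $0$ (attained precisely on $\G_0$) and $c\in\C^*$ (attained precisely on $\G\setminus\G_0$), and that $\gm\notin\G_0$, so $f(\gm)=c$.

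First I would dispose of the case $\af\in\G_0$. Then $f(\af)=0\ne c=f(\gm)$, so the first assertion of \cref{d_gm(af)=d_gm(0)} applies verbatim and gives $d_\gm(\af)=d_\gm(0)$, as required. Next I would treat $\af\notin\G_0$, where $f(\af)=c=f(\gm)$. Here I want to invoke the second assertion of \cref{d_gm(af)=d_gm(0)}, and for that I must check that $f(\af+\gm)\ne 2f(\gm)=2c$. The point is that $f(\af+\gm)\in f(\G)=\{0,c\}$, while $2c\notin\{0,c\}$ because $c\ne 0$; hence $f(\af+\gm)\ne 2c$ regardless of whether $\af+\gm$ lies in $\G_0$ or not, and the lemma yields $d_\gm(\af)=0$.

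Since \cref{d_gm(af)=d_gm(0)} does all the work, there is essentially no obstacle to overcome; the only thing one needs to get right is that the case split on the value of $f(\af+\gm)$ is irrelevant, being resolved uniformly by the observation $2c\notin f(\G)$.
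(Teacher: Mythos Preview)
Your proof is correct and follows essentially the same route as the paper's: both cases are handled by direct application of \cref{d_gm(af)=d_gm(0)}. The only cosmetic difference is that the paper verifies $f(\af+\gm)\ne 2c$ via an explicit case split on whether $\af+\gm\in\G_0$, whereas you dispatch it uniformly with the observation $2c\notin f(\G)=\{0,c\}$.
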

\begin{proof}
	If $\af\in\G_0$, then $0=f(\af)\ne f(\gm)=c$, so $d_\gm(\af)=d_\gm(0)$ by \cref{d_gm(af)=d_gm(0)}.
	
	If $\af\not\in\G_0$, then $f(\af)=f(\gm)=c$. Either $\af+\gm\in\G_0$, in which case $f(\af+\gm)=0\ne 2c=2f(\gm)$, or $\af+\gm\not\in\G_0$, in which case $f(\af+\gm)=c\ne 2c=2f(\gm)$, so $d_\gm(\af)=0$ by \cref{d_gm(af)=d_gm(0)}. 
\end{proof}

\begin{lem}\label{vf(e_af)=ae_(af+gm)-|f(G)|=2}
	Let $|f(\G)|=2$. Then for any fixed $\gm\not\in\G_0$ and $a\in\C$ the linear map $\vf:V(f)\to V(f)$ given by
	\begin{align*}
		\vf(e_\af)=
		\begin{cases}
			ae_{\af+\gm}, & \af\in\G_0,\\
			0, & \af\not\in\G_0,
		\end{cases}
	\end{align*}
	is a $\frac 12$-derivation of $V(f)$.
\end{lem}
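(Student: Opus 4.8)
The plan is to set $\vf=\vf_\gm$, where $d_\gm(\af)=a$ for $\af\in\G_0$ and $d_\gm(\af)=0$ for $\af\notin\G_0$, and to verify the characterizing identity \cref{one-half-der-in-terms-of-d_af} for every pair $\af,\bt\in\G$ by splitting into cases according to whether $\af$ and $\bt$ lie in $\G_0$. Throughout I would use that $\G_0$ is a subgroup of $\G$ (\cref{G_0-subgroup}), that $f\equiv 0$ on $\G_0$ and $f\equiv c$ on $\G\setminus\G_0$, and the cocycle condition \cref{(f(af+bt)-f(af)-f(bt))(f(af)-f(bt))=0}; in particular, for $\af\in\G_0$ one has $f(\af)=0\ne c=f(\gm)$, hence $f(\af+\gm)=f(\af)+f(\gm)=c$, and if exactly one of $\af,\bt$ lies in $\G_0$ then $f(\af+\bt)=c$, so $\af+\bt\notin\G_0$.

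First I would treat the case $\af,\bt\in\G_0$: the left-hand side of \cref{one-half-der-in-terms-of-d_af} is $2(f(\bt)-f(\af))d_\gm(\af+\bt)=0$ since $f(\af)=f(\bt)=0$, while the right-hand side equals $(0-c)a+(c-0)a=0$ because $f(\af+\gm)=f(\bt+\gm)=c$ and $d_\gm(\af)=d_\gm(\bt)=a$. For $\af\in\G_0$, $\bt\notin\G_0$ we have $\af+\bt\notin\G_0$, so $d_\gm(\af+\bt)=0$ and the left-hand side vanishes; on the right-hand side $d_\gm(\bt)=0$, and the surviving term is $(f(\bt)-f(\af+\gm))d_\gm(\af)=(c-c)a=0$. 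The case $\af\notin\G_0$, $\bt\in\G_0$ then follows from this one and the skew-symmetry of \cref{one-half-der-in-terms-of-d_af} under the swap $\af\leftrightarrow\bt$ (or by a direct identical computation using $f(\bt+\gm)=c=f(\af)$). Finally, for $\af,\bt\notin\G_0$ one has $f(\af)=f(\bt)=c$ and $d_\gm(\af)=d_\gm(\bt)=0$, so both sides of \cref{one-half-der-in-terms-of-d_af} are $0$ irrespective of where $\af+\bt$ lies. By \cref{conditions-on-d} this gives $\vf_\gm\in\Dl(V(f))$, as required.

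I do not expect a genuine obstacle here: this is the $|f(\G)|=2$ counterpart of \cref{vf(e_af)=ae_(af+gm)-gm-in-G_0,vf(e_af)=ae_(af+gm)-half-der}, and in every case the verification reduces to both sides of \cref{one-half-der-in-terms-of-d_af} being $0$. The only point requiring a little care is the bookkeeping in the mixed cases, where $f$ on $\af+\gm$, $\bt+\gm$ and $\af+\bt$ must be evaluated via the cocycle identity rather than by assuming additivity of $f$, which may fail when $|f(\G)|=2$.
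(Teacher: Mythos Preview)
Your proof is correct and follows essentially the same approach as the paper: set $\vf=\vf_\gm$ with the indicated $d_\gm$, then verify \cref{one-half-der-in-terms-of-d_af} by splitting into the four cases according to membership of $\af,\bt$ in $\G_0$, reducing both sides to $0$ in each case. The only cosmetic difference is that you compute $f(\af+\gm)=c$ via the cocycle identity \cref{(f(af+bt)-f(af)-f(bt))(f(af)-f(bt))=0}, whereas the paper obtains it directly from $\af+\gm\notin\G_0$ using that $\G_0$ is a subgroup; both are valid.
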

\begin{proof}
	We have $\vf=\vf_\gm$, where 
	\begin{align*}
		d_\gm(\af)=
			\begin{cases}
			a, & \af\in\G_0,\\
			0, & \af\not\in\G_0,
		\end{cases}
	\end{align*}
for all $\af\in\G$. Take arbitrary $\af,\bt\in\G$. 
	
	\textit{Case 1.} $\af,\bt\in\G_0$. Then  $\af+\gm,\bt+\gm\not\in\G_0$. Hence, $f(\af)=f(\bt)=0$, $f(\af+\gm)=f(\bt+\gm)$ and $d_\gm(\af)=d_\gm(\bt)=a$, so both sides of \cref{one-half-der-in-terms-of-d_af} are zero.
	
	\textit{Case 2.} $\af\in\G_0$, $\bt\not\in\G_0$. Then $\af+\bt,\af+\gm\not\in\G_0$, so $d_\gm(\af+\bt)=d_\gm(\bt)=0$ and $f(\af+\gm)=f(\bt)$. Again, both sides of \cref{one-half-der-in-terms-of-d_af} are equal to $0$.
	
	\textit{Case 3.} $\af\not\in\G_0$, $\bt\in\G_0$. Similarly to Case~2, we have $d_\gm(\af+\bt)=d_\gm(\af)=0$ and $f(\bt+\gm)=f(\af)$, so both sides of \cref{one-half-der-in-terms-of-d_af} are equal to $0$.
	
	\textit{Case 4.} $\af,\bt\not\in\G_0$. Then $f(\af)=f(\bt)$ and $d_\gm(\af)=d_\gm(\bt)=0$, so both sides of \cref{one-half-der-in-terms-of-d_af} are again zero.
	%\textit{Case 2.} $f(\gm)\ne 0$.
	%
	%\textit{Case 2.1.} $f(\af)=f(\bt)=f(\gm)$. Then  $d_\gm(\af)=d_\gm(\bt)=f(\bt)-f(\af)=0$, so both sides of \cref{one-half-der-in-terms-of-d_af} are zero.
	%
	%\textit{Case 2.2.} $f(\af)=f(\gm)$, $f(\bt)\ne f(\gm)$. Then $d_\gm(\af)=0$, $d_\gm(\bt)=a$ and $f(\bt+\gm)=f(\bt)+f(\gm)=f(\bt)+f(\af)$, so the right-hand side of \cref{one-half-der-in-terms-of-d_af} is $f(\bt)a$. Since $f(\af)\ne f(\bt)$, then $f(\af+\bt)=f(\af)+f(\bt)=f(\gm)+f(\bt)$. If $f(\bt)=0$, then $f(\af+\bt)=f(\gm)\ne 0$, so $d_\gm(\af+\bt)=0$, and both sides of \cref{one-half-der-in-terms-of-d_af} are zero. If $f(\bt)\ne 0$, then $f(\af+\bt)\ne f(\gm)$, so $d_\gm(\af+\bt)=a$, and \cref{one-half-der-in-terms-of-d_af} holds $\iff(f(\bt)-2f(\gm))a=0$.
\end{proof}

\begin{prop}\label{half-der-|f(G)|=2}
	Let $|f(\G)|=2$. Then $\frac 12$-derivations of $V(f)$ are of the form
	\begin{align}
		\vf(e_\af)=
		\begin{cases}
			\sum_{\gm\in\G} a_\gm e_{\af+\gm}, & \af\in\G_0,\\
			\sum_{\gm\in\G_0} a_\gm e_{\af+\gm}, & \af\not\in\G_0,
		\end{cases}
	\end{align} 
	where $\{a_\gm\}_{\gm\in\G}\sst\C$ is a sequence having only a finite number of non-zero elements.
\end{prop}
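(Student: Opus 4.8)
The plan is to combine the constraints already derived for the graded components $\vf_\gm$ with the realization lemmas that show those constraints are sharp. Recall that any $\frac12$-derivation $\vf$ of $V(f)$ decomposes as $\vf=\sum_{\gm\in\G}\vf_\gm$ with each $\vf_\gm\in\Dl(V(f))$, and $\vf_\gm(e_\af)=d_\gm(\af)e_{\af+\gm}$. Since $|f(\G)|=2$, the group $\G$ splits into $\G_0$ and its complement $\G\setminus\G_0$, and $f$ takes only the values $0$ and $c$. So I would first split over whether $\gm\in\G_0$ or $\gm\notin\G_0$.

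For $\gm\in\G_0$, \cref{d_gm(af)=d_gm(0)-if-gm-in-G_0} (applicable since $|f(\G)|\ge 2$) gives $d_\gm(\af)=d_\gm(0)$ for all $\af\in\G$, i.e. $\vf_\gm(e_\af)=a_\gm e_{\af+\gm}$ with $a_\gm:=d_\gm(0)$ constant. For $\gm\notin\G_0$, \cref{d_gm(af)=d_gm(0)-for-|f(G)|=2} gives $d_\gm(\af)=d_\gm(0)=:a_\gm$ when $\af\in\G_0$ and $d_\gm(\af)=0$ when $\af\notin\G_0$. Summing over $\gm$, this says precisely that for $\af\in\G_0$ one has $\vf(e_\af)=\sum_{\gm\in\G}a_\gm e_{\af+\gm}$ (all $\gm$ contribute), while for $\af\notin\G_0$ only the components with $\gm\in\G_0$ survive, giving $\vf(e_\af)=\sum_{\gm\in\G_0}a_\gm e_{\af+\gm}$ — exactly the claimed form. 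The finiteness condition on $\{a_\gm\}$ is automatic because $\vf(e_0)$ is a finite linear combination of basis vectors, and $\vf(e_0)=\sum_\gm a_\gm e_\gm$ pins down all the $a_\gm$.

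Conversely, I must check that every map of the stated form is a $\frac12$-derivation. By linearity of $\Dl(V(f))$ in the map, it suffices to treat the two building blocks separately: for $\gm\in\G_0$, the map $e_\af\mapsto a_\gm e_{\af+\gm}$ is a $\frac12$-derivation by \cref{vf(e_af)=ae_(af+gm)-gm-in-G_0}; for $\gm\notin\G_0$, the map sending $e_\af\mapsto a_\gm e_{\af+\gm}$ for $\af\in\G_0$ and $e_\af\mapsto 0$ for $\af\notin\G_0$ is a $\frac12$-derivation by \cref{vf(e_af)=ae_(af+gm)-|f(G)|=2}. An arbitrary finite sum of such blocks is again in $\Dl(V(f))$, and this is exactly a map of the displayed form. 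I do not expect a genuine obstacle here: every piece is already packaged in the preceding lemmas, and the only care needed is bookkeeping — making sure that in the $\af\in\G_0$ branch the index $\gm$ genuinely ranges over all of $\G$ (both the $\G_0$-blocks and the $(\G\setminus\G_0)$-blocks feed into it), whereas in the $\af\notin\G_0$ branch the $(\G\setminus\G_0)$-blocks vanish and only $\gm\in\G_0$ remains. Thus the proposition is an immediate consequence of \cref{d_gm(af)=d_gm(0)-if-gm-in-G_0,d_gm(af)=d_gm(0)-for-|f(G)|=2,vf(e_af)=ae_(af+gm)-gm-in-G_0,vf(e_af)=ae_(af+gm)-|f(G)|=2}.
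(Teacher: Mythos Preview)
Your proof is correct and follows exactly the same approach as the paper: the paper's proof is a single sentence citing precisely \cref{d_gm(af)=d_gm(0)-if-gm-in-G_0,d_gm(af)=d_gm(0)-for-|f(G)|=2,vf(e_af)=ae_(af+gm)-gm-in-G_0,vf(e_af)=ae_(af+gm)-|f(G)|=2}, and you have simply unpacked that citation with the explicit bookkeeping for the two cases $\gm\in\G_0$ and $\gm\notin\G_0$.
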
 
\begin{proof}
	A consequence of \cref{d_gm(af)=d_gm(0)-if-gm-in-G_0,d_gm(af)=d_gm(0)-for-|f(G)|=2,vf(e_af)=ae_(af+gm)-gm-in-G_0,vf(e_af)=ae_(af+gm)-|f(G)|=2}.
\end{proof}

Based on \cite[Theorem 1]{fil1} and \cref{half-der-|f(G)|>=4,half-der-|f(G)|=3,half-der-|f(G)|=2}, we have the following corollary.

 \begin{cor}
The Witt type Lie algebra $V(f)$ admits a non-trivial ${\rm Hom}$-Lie algebra structure.
 \end{cor}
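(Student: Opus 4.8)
The plan is to combine Filippov's result \cite[Theorem 1]{fil1}, which says that any nonzero $\delta$-derivation with $\delta\neq 0,1$ of a Lie algebra yields a non-trivial ${\rm Hom}$-Lie algebra structure, with the explicit descriptions of $\Dl(V(f))$ obtained in \cref{half-der-|f(G)|>=4,half-der-|f(G)|=3,half-der-|f(G)|=2}. Since $\frac12$-derivations are exactly $\delta$-derivations with $\delta=\frac12\neq 0,1$, it suffices to exhibit a \emph{non-trivial} $\frac12$-derivation of $V(f)$, i.e.\ one that is not the multiplication by a scalar, in each of the three cases that together exhaust all admissible $f$ (recall we assume $f$ non-trivial, $f(0)=0$, $|f(\G)|\ge 2$).

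The key step is therefore the construction of a non-trivial $\frac12$-derivation. I would simply pick a nonzero $\gm\in\G_0$ if one exists, or more uniformly use the fact that $\G$ itself is non-trivial: in every case, \cref{vf(e_af)=ae_(af+gm)-gm-in-G_0} provides, for each $\gm\in\G_0$ and $a\in\C$, the $\frac12$-derivation $\vf(e_\af)=ae_{\af+\gm}$, and for $\gm\neq 0$ this is visibly not a scalar multiple of the identity (it shifts the grading). The only degenerate situation is $\G_0=\{0\}$; but then, since $|f(\G)|\ge 2$ forces $\G\neq\{0\}$, one can instead invoke \cref{half-der-|f(G)|>=4} when $|f(\G)|\ge4$ (every $\gm\in\G$ gives such a shift, and there is a nonzero one), or \cref{half-der-|f(G)|=2} when $|f(\G)|=2$ (again a nonzero $\gm$ exists, and $\vf(e_\af)=ae_{\af+\gm}$ for $\af\in\G_0$, $0$ otherwise, is a nonzero non-scalar map), while $|f(\G)|=3$ with $\G_0=\{0\}$ cannot occur because $\G_0$ has index $3$. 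So in all cases $\Dl(V(f))$ strictly contains the trivial $\frac12$-derivations, i.e.\ there is a nonzero $\delta$-derivation with $\delta=\frac12$.

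Having produced a non-trivial $\frac12$-derivation $\vf$, I would feed it into \cite[Theorem 1]{fil1} to conclude that $(V(f),[\cdot,\cdot],\vf)$ is a non-trivial ${\rm Hom}$-Lie algebra structure on $V(f)$, which is exactly the assertion of the corollary. I do not anticipate a genuine obstacle here: the content is entirely in the already-proved propositions describing $\Dl(V(f))$, and the corollary is a one-line deduction. The only point requiring a moment of care is making sure the chosen $\frac12$-derivation is non-trivial in the degenerate case $\G_0=\{0\}$, which is handled by the remarks above, so the proof reduces to citing \cref{half-der-|f(G)|>=4,half-der-|f(G)|=3,half-der-|f(G)|=2} together with \cite[Theorem 1]{fil1}.
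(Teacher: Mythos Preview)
Your approach is the same as the paper's: its entire proof reads ``Based on \cite[Theorem~1]{fil1} and \cref{half-der-|f(G)|>=4,half-der-|f(G)|=3,half-der-|f(G)|=2}, we have the following corollary.'' You are simply unpacking this citation with explicit details.

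There is, however, a genuine error in your case analysis. The claim that ``$|f(\G)|=3$ with $\G_0=\{0\}$ cannot occur because $\G_0$ has index $3$'' is false: having index $3$ means $|\G/\G_0|=3$, which is perfectly compatible with $\G_0=\{0\}$ --- it simply forces $\G\cong\Z/3\Z$. In that case \cref{half-der-|f(G)|=3} gives $\Dl(V(f))=\C\cdot\id$, so \emph{no} non-scalar $\tfrac12$-derivation exists (indeed $V(f)\cong\mathfrak{sl}_2(\C)$ here). Whether this is fatal depends on the precise content of \cite[Theorem~1]{fil1}: the paper's paraphrase requires only a \emph{nonzero} $\delta$-derivation with $\delta\ne 0,1$, in which case $\id$ itself suffices, your extra verification is superfluous, and the error is harmless. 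If instead Filippov's result actually needs a non-scalar $\delta$-derivation to produce a non-trivial ${\rm Hom}$-Lie structure, then both your argument and the paper's one-line proof share the same gap at $\G\cong\Z/3\Z$.
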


\subsection{Transposed Poisson structures on $V(f)$}\label{sec-tp}

\subsubsection{The case $|f(\G)|\ge 4$}

\begin{lem}\label{e_af-cdot-e_bt=e_af+bt-tp-|f(G)|>=4}
	Let $|f(\G)|\ge 4$. Then $e_\af\cdot e_\bt=e_{\af+\bt}$ defines a transposed Poisson algebra structure on $V(f)$.
\end{lem}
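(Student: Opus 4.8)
The plan is to deduce everything from the key lemma, \cref{glavlem}: it says that $(V(f),\cdot,[\cdot,\cdot])$ is a transposed Poisson algebra if and only if $\cdot$ is commutative and associative and, for every $z\in V(f)$, the left multiplication operator $L_z\colon x\mapsto z\cdot x$ belongs to $\Dl(V(f))$. (Note that both operations are nonzero here: $\cdot$ obviously is, and the Lie bracket is nonzero since $|f(\G)|\ge 4\ge 2$, so $V(f)$ is non-abelian.) So I would break the argument into three short steps.

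First, commutativity and associativity hold already on the defining basis because $\G$ is abelian: $e_\af\cdot e_\bt=e_{\af+\bt}=e_\bt\cdot e_\af$ and $(e_\af\cdot e_\bt)\cdot e_\gm=e_{\af+\bt+\gm}=e_\af\cdot(e_\bt\cdot e_\gm)$, and these extend by bilinearity; equivalently, $(V(f),\cdot)$ is just the group algebra $\C[\G]$. Second, for the $\frac 12$-derivation condition, I would write a general $z\in V(f)$ as $z=\sum_{\gm\in\G}b_\gm e_\gm$ with finite support. Then $L_z(e_\af)=\sum_{\gm\in\G}b_\gm e_{\af+\gm}$, that is, $L_z=\sum_{\gm\in\G}b_\gm\vf_\gm$ where $\vf_\gm(e_\af)=e_{\af+\gm}$. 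But $\vf_\gm$ is exactly the map appearing in \cref{vf(e_af)=ae_(af+gm)-half-der} with $a=1$, which, since $|f(\G)|\ge 4$, is a $\frac 12$-derivation of $V(f)$. Because $\Dl(V(f))$ is a linear subspace of the space of linear maps $V(f)\to V(f)$ and the sum defining $L_z$ is finite, $L_z\in\Dl(V(f))$. Third, apply \cref{glavlem} to conclude.

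I do not expect any real obstacle in this lemma: the entire computational content is packaged inside \cref{vf(e_af)=ae_(af+gm)-half-der}, whose proof verifies identity \cref{one-half-der-in-terms-of-d_af} using the additivity of $f$ (guaranteed for $|f(\G)|\ge 4$ by \cite[Lemma 4.6]{Yu97}). The only point worth flagging is that $V(f)$ is infinite-dimensional, so one must explicitly observe that an arbitrary $z$ is a \emph{finite} linear combination of the $e_\gm$, hence $L_z$ is a finite linear combination of the $\vf_\gm$, which keeps us inside the linear subspace $\Dl(V(f))$.
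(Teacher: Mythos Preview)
Your proposal is correct and follows essentially the same route as the paper: verify commutativity and associativity of $\cdot$ as the group algebra product, then use \cref{glavlem} together with the fact that each shift $\vf_\gm$ is a $\frac 12$-derivation (the paper cites \cref{half-der-|f(G)|>=4}, you cite the slightly more direct \cref{vf(e_af)=ae_(af+gm)-half-der}). The only cosmetic difference is that the paper checks the $\frac 12$-derivation condition for multiplication by a basis vector $e_\bt$ and leaves the extension to arbitrary $z$ implicit, whereas you spell out the linearity argument; this is a harmless expository choice, not a different method.
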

\begin{proof}
	Fix $\bt\in\G$ and let $\vf(e_\af)=e_{\af+\bt}$. Then $\vf\in\Dl(V(f))$ by \cref{half-der-|f(G)|>=4}, where the corresponding sequence $\{a_\gm\}_{\gm\in\G}\sst\C$ has $a_\bt=1$ and $a_\gm=0$ for $\gm\ne\bt$. Since $\cdot$ is clearly commutative and associative, \cref{glavlem} guarantees that $\cdot$ is a transposed Poisson algebra structure on $V(f)$.
\end{proof}

As a consequence of  \cref{e_af-cdot-e_bt=e_af+bt-tp-|f(G)|>=4} and \cite[Lemma 5]{FKL} we obtain the following.
\begin{cor}\label{mut-e_af-cdot-e_bt=e_af+bt-tp-|f(G)|>=4}
	Let $|f(\G)|\ge 4$. Then every mutation $e_\af\cdot_b e_\bt:=e_\af\cdot b\cdot e_\bt$ of $e_\af\cdot e_\bt=e_{\af+\bt}$ is a transposed Poisson algebra structure on $V(f)$.
\end{cor}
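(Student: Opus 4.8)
The plan is to reduce \cref{mut-e_af-cdot-e_bt=e_af+bt-tp-|f(G)|>=4} to the already-established \cref{e_af-cdot-e_bt=e_af+bt-tp-|f(G)|>=4} by invoking the general mutation principle for transposed Poisson algebras, i.e.\ \cite[Lemma 5]{FKL}. Recall that this lemma says: if $(\mathfrak{L},\cdot,[\cdot,\cdot])$ is a transposed Poisson algebra and $b\in\mathfrak{L}$ is any fixed element, then the new product $x\cdot_b y:=x\cdot b\cdot y$ (using associativity and commutativity of $\cdot$, this is unambiguous) again makes $(\mathfrak{L},\cdot_b,[\cdot,\cdot])$ a transposed Poisson algebra. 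So the strategy is purely a citation-plus-bookkeeping argument: first observe that by \cref{e_af-cdot-e_bt=e_af+bt-tp-|f(G)|>=4} the triple $(V(f),\cdot,[\cdot,\cdot])$ with $e_\af\cdot e_\bt=e_{\af+\bt}$ is a transposed Poisson algebra (this is exactly the group algebra structure $\mathbb{C}[\G]$ on the underlying space of $V(f)$), and then apply the mutation lemma with an arbitrary $b=\sum_{\gm\in\G}b_\gm e_\gm\in V(f)$.

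Concretely, I would write: fix $b\in V(f)$. By \cref{e_af-cdot-e_bt=e_af+bt-tp-|f(G)|>=4}, $(V(f),\cdot,[\cdot,\cdot])$ is a transposed Poisson algebra, where $\cdot$ is the commutative associative product $e_\af\cdot e_\bt=e_{\af+\bt}$. Then \cite[Lemma 5]{FKL} applied to this transposed Poisson algebra and the element $b$ shows that the mutation $\cdot_b$, defined by $e_\af\cdot_b e_\bt:=e_\af\cdot b\cdot e_\bt$ and extended bilinearly, is again a commutative associative product making $(V(f),\cdot_b,[\cdot,\cdot])$ a transposed Poisson algebra. This is precisely the claimed statement.

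The only point that needs a sentence of care is the \emph{non-triviality} (nonzeroness) requirement built into \cref{tpa}: a transposed Poisson algebra demands that both bilinear operations be nonzero. The Lie bracket is nonzero because $f$ is non-trivial (so $|f(\G)|\geq 2$), and the mutated product $\cdot_b$ is nonzero precisely when $b\neq 0$; if $b=0$ then $\cdot_b\equiv 0$ and the triple is not a transposed Poisson algebra in the strict sense of \cref{tpa}. So I would either implicitly assume $b\neq 0$ (matching the convention already used throughout the paper, and the way \cite[Lemma 5]{FKL} is stated) or remark parenthetically that the $b=0$ case is vacuous. No genuine obstacle arises: all the work is done by \cref{e_af-cdot-e_bt=e_af+bt-tp-|f(G)|>=4} and by the cited mutation lemma, and the proof fits in two or three lines.

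If one preferred a self-contained argument instead of citing \cite[Lemma 5]{FKL}, the alternative plan would be to verify directly that for each $z\in V(f)$ the map $x\mapsto z\cdot_b x=z\cdot b\cdot x$ is a $\tfrac12$-derivation of $[\cdot,\cdot]$, and then invoke \cref{glavlem}. Since $z\cdot_b x=(z\cdot b)\cdot x$, this map is just multiplication by the fixed element $z\cdot b\in V(f)$ in the algebra $(V(f),\cdot)$; but \cref{e_af-cdot-e_bt=e_af+bt-tp-|f(G)|>=4} together with \cref{glavlem} already tells us that multiplication by any element of $V(f)$ in $(V(f),\cdot)$ is a $\tfrac12$-derivation. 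Commutativity and associativity of $\cdot_b$ follow at once from those of $\cdot$. I would, however, keep the short citation-based proof in the paper, since it is cleaner and the mutation construction is by now standard.
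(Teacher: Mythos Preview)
Your proposal is correct and matches the paper's own argument exactly: the paper simply states the corollary as a consequence of \cref{e_af-cdot-e_bt=e_af+bt-tp-|f(G)|>=4} together with \cite[Lemma~5]{FKL}. Your additional remarks on the $b=0$ caveat and the alternative direct verification via \cref{glavlem} are sound but go beyond what the paper records.
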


\begin{lem}\label{tp-mut-e_af-cdot-e_bt=e_af+bt-|f(G)|>=4}
	Let $|f(\G)|\ge 4$. Then every transposed Poisson algebra structure on $V(f)$ is a mutation of $e_\af\cdot e_\bt=e_{\af+\bt}$, where $\af,\bt\in\G$. 
\end{lem}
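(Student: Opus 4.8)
The plan is to show that every transposed Poisson algebra structure $*$ on $V(f)$ is the mutation $\cdot_b$ of the group algebra product $e_\af\cdot e_\bt=e_{\af+\bt}$ by the single element $b:=e_0*e_0\in V(f)$. The starting point is \cref{glavlem}: for each $z\in V(f)$ the left multiplication $\vf_z\colon x\mapsto z*x$ is a $\frac12$-derivation of $V(f)$. By \cref{half-der-|f(G)|>=4} this forces $\vf_z(e_\af)=\sum_{\gm\in\G}a_\gm e_{\af+\gm}$ for some finitely supported family $\{a_\gm\}\sst\C$ (depending linearly on $z$). The crucial observation is that such a map is ``shift-equivariant'': evaluating at $\af=0$ gives $\vf_z(e_0)=\sum_\gm a_\gm e_\gm=z*e_0$, and then $\vf_z(e_\af)=\sum_\gm a_\gm e_{\af+\gm}=e_\af\cdot(z*e_0)$ for every $\af\in\G$, where $\cdot$ denotes the group algebra product. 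In other words, $\vf_z$ — and hence the whole operation $z*(-)$ — is completely determined by the vector $z*e_0$.

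Next I would apply this twice. Taking $z=e_0$ yields $e_0*e_\af=e_\af\cdot(e_0*e_0)=e_\af\cdot b$ for all $\af$. Taking $z=e_\af$ and using commutativity of $*$ together with the identity just obtained, $e_\af*e_0=e_0*e_\af=e_\af\cdot b$, so the defining vector of $\vf_{e_\af}$ is $e_\af\cdot b$; therefore
\[
e_\af*e_\bt=\vf_{e_\af}(e_\bt)=e_\bt\cdot(e_\af*e_0)=e_\bt\cdot e_\af\cdot b=e_\af\cdot b\cdot e_\bt=e_\af\cdot_b e_\bt
\]
for all $\af,\bt\in\G$. Since both sides are bilinear in the basis, $*=\cdot_b$, which is exactly a mutation of $e_\af\cdot e_\bt=e_{\af+\bt}$; note that $b$ is a genuine element of $V(f)$ precisely because every $\frac12$-derivation in \cref{half-der-|f(G)|>=4} has only finitely many nonzero coefficients.

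I do not expect a real obstacle here: once \cref{half-der-|f(G)|>=4} is available, the argument is a short two-step computation, and the only thing to be careful about is the bookkeeping with the shifts $e_\af\cdot(-)$, i.e.\ recording correctly that a $\frac12$-derivation of $V(f)$ in this case is determined by its value at $e_0$. Combined with \cref{mut-e_af-cdot-e_bt=e_af+bt-tp-|f(G)|>=4}, which shows that every such mutation is indeed a transposed Poisson structure, this will give the full description: for $|f(\G)|\ge4$ the transposed Poisson algebra structures on $V(f)$ are exactly the mutations $(V(f),\cdot_b)$, $b\in V(f)$.
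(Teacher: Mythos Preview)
Your proposal is correct and follows essentially the same approach as the paper: both use \cref{glavlem} together with \cref{half-der-|f(G)|>=4} to see that each multiplication map $x\mapsto z*x$ has the form $e_\af\mapsto\sum_\gm a_\gm e_{\af+\gm}$ with coefficients independent of $\af$, then invoke commutativity to identify the mutation element as $b=e_0*e_0$. Your ``shift-equivariance'' phrasing ($\vf_z(e_\af)=e_\af\cdot\vf_z(e_0)$) is a clean repackaging of the paper's coefficient bookkeeping $a^\af_\gm=a^0_{\gm-\af}$, but the argument is the same.
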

\begin{proof}
	Let $*$ be a Poisson algebra structure on $V(f)$ and $\af,\bt\in\G$. By \cref{glavlem,half-der-|f(G)|>=4} there exist sequences $\{a^\af_\gm\}_{\gm\in\G}\sst\C$ and $ \{a^\bt_\gm\}_{\gm\in\G}\sst\C$ with only a finite number of non-zero elements such that
	\begin{align*}
		e_\af*e_\bt=\sum_{\gm\in\G} a^\af_\gm e_{\bt+\gm}\text{ and }e_\bt*e_\af=\sum_{\gm\in\G} a^\bt_\gm e_{\af+\gm}.
	\end{align*} 
	Since $*$ is commutative, for any $\gm,\gm'\in\G$ if $\bt+\gm=\af+\gm'$, then $a^\af_{\gm}=a^\bt_{\gm'}$. Hence,
	\begin{align}\label{a^af_gm=a^bt_(bt-af+gm)}
		a^\af_{\gm}=a^\bt_{\bt-\af+\gm}
	\end{align}
for all $\gm\in\G$. Define $b_\gm=a^0_\gm$, so that $a_\gm^\af=b_{\gm-\af}$ by \cref{a^af_gm=a^bt_(bt-af+gm)}. Then
\begin{align*}
	e_\af*e_\bt=\sum_{\gm\in\G} b_{\gm-\af} e_{\bt+\gm}=\sum_{\gm\in\G} b_\gm e_{\af+\bt+\gm}=e_\af\cdot\left(\sum_{\gm\in\G} b_\gm e_\gm\right)\cdot e_\bt=e_\af\cdot_b e_\bt,
\end{align*}
where $b=\sum_{\gm\in\G} b_\gm e_\gm$.
\end{proof}

\begin{prop}\label{tp-4}
	Let $|f(\G)|\ge 4$. Then transposed Poisson algebra structures on $V(f)$ are exactly mutations of $e_\af\cdot e_\bt=e_{\af+\bt}$, where $\af,\bt\in\G$. 
\end{prop}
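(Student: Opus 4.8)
The plan is to deduce \cref{tp-4} immediately from the two preceding lemmas, since between them they establish both inclusions. \cref{mut-e_af-cdot-e_bt=e_af+bt-tp-|f(G)|>=4} already gives that every mutation $e_\af\cdot_b e_\bt=e_\af\cdot b\cdot e_\bt$ of the group algebra product $e_\af\cdot e_\bt=e_{\af+\bt}$ is a transposed Poisson algebra structure on $V(f)$; this in turn rests on \cref{e_af-cdot-e_bt=e_af+bt-tp-|f(G)|>=4} (which, via \cref{glavlem}, reduces the compatibility check to the fact that each left multiplication is a $\frac12$-derivation, guaranteed by \cref{half-der-|f(G)|>=4}) together with the general ``mutation of a transposed Poisson algebra is again a transposed Poisson algebra'' principle from \cite[Lemma 5]{FKL}. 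Conversely, \cref{tp-mut-e_af-cdot-e_bt=e_af+bt-|f(G)|>=4} shows that every transposed Poisson structure on $V(f)$ is of this shape. So the proof of the proposition is literally ``combine \cref{mut-e_af-cdot-e_bt=e_af+bt-tp-|f(G)|>=4} and \cref{tp-mut-e_af-cdot-e_bt=e_af+bt-|f(G)|>=4}''.

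It is worth recording where the genuine content lies, since the proposition itself is a formality. The decisive input is the classification of $\frac12$-derivations in \cref{half-der-|f(G)|>=4}: when $|f(\G)|\ge4$, Yu's \cite[Lemma 4.6]{Yu97} forces $f$ to be additive, and then the identities of \cref{d_gm(af)=d_gm(0),d_gm(af)=d_gm(0)-when-there-is-bt} --- the latter applicable precisely because $|f(\G)|\ge4$ always supplies a $\bt$ with $f(\bt)\notin\{0,f(\gm),2f(\gm)\}$ --- collapse each homogeneous component $\vf_\gm$ to a scalar shift $e_\af\mapsto a_\gm e_{\af+\gm}$. By \cref{glavlem} the left multiplications of any transposed Poisson structure are such $\frac12$-derivations, so $e_\af*e_\bt=\sum_{\gm}a^\af_\gm e_{\bt+\gm}$ for suitable finitely supported sequences; commutativity then ties these sequences together via $a^\af_\gm=a^\bt_{\bt-\af+\gm}$, so a single sequence $b_\gm:=a^0_\gm$ determines everything, and one reads off $e_\af*e_\bt=\sum_\gm b_\gm e_{\af+\bt+\gm}=e_\af\cdot b\cdot e_\bt$.

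Accordingly, there is no real obstacle left at the level of \cref{tp-4}: the one place where care is needed is already dispatched inside \cref{tp-mut-e_af-cdot-e_bt=e_af+bt-|f(G)|>=4}, namely extracting the coset-shift relation $a^\af_\gm=a^\bt_{\bt-\af+\gm}$ from commutativity and verifying that the resulting $b$ does not depend on the chosen pair $\af,\bt$; associativity need not be re-checked here because $*$ is assumed from the outset to be a transposed Poisson product. I would therefore present the proof of \cref{tp-4} in a single line, as a consequence of \cref{mut-e_af-cdot-e_bt=e_af+bt-tp-|f(G)|>=4} and \cref{tp-mut-e_af-cdot-e_bt=e_af+bt-|f(G)|>=4}.
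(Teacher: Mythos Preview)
Your proposal is correct and matches the paper's own proof exactly: the proposition is stated as an immediate consequence of \cref{mut-e_af-cdot-e_bt=e_af+bt-tp-|f(G)|>=4} and \cref{tp-mut-e_af-cdot-e_bt=e_af+bt-|f(G)|>=4}. The additional commentary you give about where the real work happens (the $\frac12$-derivation classification and the commutativity argument) is accurate and consistent with the paper, though the paper does not repeat it at this point.
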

\begin{proof}
	A consequence of \cref{mut-e_af-cdot-e_bt=e_af+bt-tp-|f(G)|>=4,tp-mut-e_af-cdot-e_bt=e_af+bt-|f(G)|>=4}.
\end{proof}

\subsubsection{The case $|f(\G)|=3$}

Since $\G_0$ is the kernel of an epimorphism $\tau:\G\to\Z/3\Z$, we have $|\G/\G_0|=3$. Let us fix $\gm_i\in\G$, $i=0,1,2$, such that $\G=\sqcup_{i=0}^2(\gm_i+\G_0)$, where $\gm_0=0$. We thus have the direct sum of vector spaces
\begin{align}\label{V(f)=direct-sum-V(f)_i}
	V(f)=\bigoplus_{i=0}^2 V(f)_i,
\end{align}
where $V(f)_i$ is spanned by $\{e_{\gm_i+\gm}\}_{\gm\in\G_0}$. As above, we denote by $\cdot$ the multiplication $e_\af\cdot e_\bt=e_{\af+\bt}$ on $V(f)$. Observe that $V(f)_i\cdot V(f)_j\sst V(f)_k $, where $k\equiv(i+j)\,\mathrm{mod}\,3$, so \cref{V(f)=direct-sum-V(f)_i} is a $\Z_3$-grading of $(V(f),\cdot)$. In particular, $V(f)_i$ is not closed under $\cdot$ for $i>0$, but it turns out to be closed under a mutation of $\cdot$.

\begin{lem}
	Let $|f(\G)|=3$, $i\in\{0,1,2\}$ and $b_i\in V(f)_j$, where $j\in\{0,1,2\}$, $i+j\equiv 0 \,\mathrm{mod}\,3$. Then $V(f)_i$ is a commutative and associative algebra under $e_\af\cdot_{b_i} e_\bt:=e_\af\cdot b_i\cdot e_\bt$, where $\af,\bt\in\gm_i+\G_0$.
\end{lem}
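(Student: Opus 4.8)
The plan is to deduce the statement entirely from the fact that $(V(f),\cdot)$, with $e_\af\cdot e_\bt=e_{\af+\bt}$, is the group algebra $\C[\G]$ — in particular commutative and associative — together with the $\Z_3$-grading \cref{V(f)=direct-sum-V(f)_i}, which satisfies $V(f)_l\cdot V(f)_m\sst V(f)_k$ whenever $k\equiv l+m\,\mathrm{mod}\,3$. The mutation of $\cdot$ by a homogeneous element is then again compatible with the grading, and restricting it to the appropriate homogeneous component yields an honest algebra.

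First I would note that $\af,\bt\in\gm_i+\G_0$ means exactly $e_\af,e_\bt\in V(f)_i$, so that
\begin{align*}
e_\af\cdot_{b_i}e_\bt=e_\af\cdot b_i\cdot e_\bt\in V(f)_i\cdot V(f)_j\cdot V(f)_i\sst V(f)_k,\qquad k\equiv 2i+j\,\mathrm{mod}\,3.
\end{align*}
Since $i+j\equiv 0\,\mathrm{mod}\,3$ by hypothesis, we have $2i+j=i+(i+j)\equiv i\,\mathrm{mod}\,3$, hence $V(f)_k=V(f)_i$ and $\cdot_{b_i}$ does restrict to a well-defined binary operation on $V(f)_i$. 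This mod-$3$ bookkeeping is the only place where the hypothesis $i+j\equiv 0$ is used, and it is essentially the only non-automatic point of the proof — for a badly chosen $b_i$ the subspace $V(f)_i$ would not be closed under $\cdot_{b_i}$ — so it is the step I would double-check most carefully.

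Finally I would verify commutativity and associativity of $\cdot_{b_i}$ on $V(f)_i$; both follow at once from the corresponding properties of $\cdot$ on $V(f)$, this being the elementary fact that a mutation (homotope) of a commutative associative algebra is again commutative and associative (cf.\ \cite[Lemma 5]{FKL}). Concretely, for $x,y,z\in V(f)_i$ one has $x\cdot_{b_i}y=x\cdot b_i\cdot y=y\cdot b_i\cdot x=y\cdot_{b_i}x$, and both $(x\cdot_{b_i}y)\cdot_{b_i}z$ and $x\cdot_{b_i}(y\cdot_{b_i}z)$ equal $x\cdot b_i\cdot y\cdot b_i\cdot z$ by associativity of $\cdot$. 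Hence $(V(f)_i,\cdot_{b_i})$ is a commutative associative algebra, as claimed.
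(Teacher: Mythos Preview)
Your proof is correct and follows essentially the same approach as the paper: both reduce the question to closure of $V(f)_i$ under $\cdot_{b_i}$ via the $\Z_3$-grading computation $2i+j\equiv i\,\mathrm{mod}\,3$. The paper's version is terser, stating only that ``it suffices to prove that $V(f)_i$ is closed under $\cdot_{b_i}$'' and leaving the commutativity and associativity of the mutation implicit, whereas you spell these out explicitly.
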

\begin{proof}
	It suffices to prove that $V(f)_i$ is closed under $\cdot_b$. This is because $V(f)_i\cdot_{b_i} V(f)_i\sst V(f)_i\cdot V(f)_j\cdot V(f)_i\sst V(f)_k$, where $k\equiv 2i+j\equiv i\,\mathrm{mod}\,3$.
%	We have $V(f)_{(3-i)\,\mathrm{mod}\,3}=\gen{e_{\gm-\gm_i}\mid \gm\in\G_0}$ and $e_\af\cdot e_{\gm-\gm_i}\cdot e_\bt=e_{\af+\bt-\gm_i+\gm}$, the latter belongs to $V(f)_i$, because $\bt-\gm_i+\gm\in\G_0$.
\end{proof}

\begin{lem}\label{direct-sum-tp-|f(G)|=3}
	Let $|f(\G)|=3$, $i\in\{0,1,2\}$ and $b_i\in V(f)_j$, where $j\in\{0,1,2\}$, $i+j\equiv 0 \,\mathrm{mod}\,3$. Then $\bigoplus_{i=0}^2 (V(f)_i,\cdot_{b_i})$ is a transposed Poisson algebra structure on $V(f)$.
\end{lem}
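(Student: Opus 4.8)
The plan is to invoke \cref{glavlem}: I would check that the multiplication $*$ determined by $\bigoplus_{i=0}^2(V(f)_i,\cdot_{b_i})$ is commutative and associative, and that for every $z\in V(f)$ the map $L_z\colon x\mapsto z*x$ lies in $\Dl(V(f))$. Commutativity and associativity are immediate: by the preceding lemma each $(V(f)_i,\cdot_{b_i})$ is a commutative associative algebra (a mutation of the commutative associative algebra $(V(f),\cdot)$), and a direct sum of commutative associative algebras is again one; recall that in $\bigoplus_{i=0}^2(V(f)_i,\cdot_{b_i})$ one has $V(f)_i*V(f)_{i'}=0$ whenever $i\neq i'$.

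The substantive part is showing that each $L_z$ is a $\frac12$-derivation. By bilinearity and the decomposition $V(f)=\bigoplus_i V(f)_i$ it is enough to take $z=e_\mu$ with $\mu\in\gm_i+\G_0$ for a fixed $i$. Then $L_{e_\mu}$ annihilates $V(f)_{i'}$ for $i'\neq i$, and on $V(f)_i$ it is left $\cdot_{b_i}$-multiplication by $e_\mu$; writing $b_i=\sum_\dl (b_i)_\dl e_\dl$, this gives $L_{e_\mu}(e_\af)=\sum_\dl (b_i)_\dl e_{\af+(\mu+\dl)}$ for $\af\in\gm_i+\G_0$ and $L_{e_\mu}(e_\af)=0$ otherwise. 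Since $b_i\in V(f)_j$ with $i+j\equiv 0\pmod 3$, every shift $\mu+\dl$ occurring here lies in $\G_0$. Hence $L_{e_\mu}=\sum_{\gm\in\G_0}\vf_\gm$ with each $\vf_\gm$ of the form \cref{vf_af(e_bt)=d_af(bt)e_(af+bt)}, where $d_\gm(\af)=(b_i)_{\gm-\mu}$ for $\af\in\gm_i+\G_0$ and $d_\gm(\af)=0$ otherwise, and I would verify \cref{one-half-der-in-terms-of-d_af} for each such $\vf_\gm$ via \cref{conditions-on-d}.

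Because $\gm\in\G_0$, \cref{G_0-subgroup} yields $f(\af+\gm)=f(\af)$ and $f(\bt+\gm)=f(\bt)$, so \cref{one-half-der-in-terms-of-d_af} collapses to
\[
(f(\bt)-f(\af))\bigl(2d_\gm(\af+\bt)-d_\gm(\af)-d_\gm(\bt)\bigr)=0.
\]
When $\af,\bt$ lie in the same coset of $\G_0$ one has $f(\af)=f(\bt)$ and the identity is automatic; when they lie in different cosets one is left to check $2d_\gm(\af+\bt)=d_\gm(\af)+d_\gm(\bt)$, where $d_\gm$ takes only the value $(b_i)_{\gm-\mu}$ on $\gm_i+\G_0$ and $0$ on the other two cosets. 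This reduces to a short bookkeeping over the possible cosets of $\af$ and $\bt$, using the $\Z_3$-grading of $(V(f),\cdot)$ and the relation $i+j\equiv 0\pmod 3$ to keep track of which coset $\af+\bt$ falls into; this case check is the only step that requires real attention, everything else being formal, and the statement then follows from \cref{glavlem}.
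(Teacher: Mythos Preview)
Your plan matches the paper's strategy (commutativity and associativity are clear, then show each $L_{e_\mu}$ is a $\frac 12$-derivation by decomposing it into graded pieces $\vf_\gm$ with $\gm\in\G_0$), but the ``short bookkeeping'' you defer is exactly where the argument breaks, and it cannot be completed. Take $\mu\in\gm_1+\G_0$ (so $i=1$), $\af\in\gm_1+\G_0$, $\bt\in\G_0$. Then $f(\af)\ne f(\bt)$ and $\af+\bt\in\gm_1+\G_0$, so with your $d_\gm$ one gets $d_\gm(\af)=d_\gm(\af+\bt)=(b_1)_{\gm-\mu}$ and $d_\gm(\bt)=0$; the required identity $2d_\gm(\af+\bt)=d_\gm(\af)+d_\gm(\bt)$ reads $2(b_1)_{\gm-\mu}=(b_1)_{\gm-\mu}$, forcing $(b_1)_{\gm-\mu}=0$ for every $\gm\in\G_0$, hence $b_1=0$. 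The same happens for $b_0$ and $b_2$. Equivalently, test \cref{Trans-Leibniz} directly with $z=e_\mu$, $x=e_\af$, $y=e_\bt$ for $\mu,\af\in\G_0$ and $\bt\notin\G_0$: the left side vanishes (since $\af+\bt\notin\G_0$ gives $e_\mu*e_{\af+\bt}=0$), while the right side is $[e_\mu\cdot b_0\cdot e_\af,e_\bt]=f(\bt)\,(e_\mu\cdot b_0\cdot e_\af)\cdot e_\bt$, nonzero as soon as $b_0\ne 0$.

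The underlying point is that \cref{half-der-|f(G)|=3} says every $\frac 12$-derivation has the form $e_\af\mapsto\sum_{\gm\in\G_0}a_\gm e_{\af+\gm}$ with $a_\gm$ \emph{independent of $\af$} (this is precisely \cref{d_gm(af)=d_gm(0)-if-gm-in-G_0}); your $d_\gm(\af)$ depends on the coset of $\af$, so $L_{e_\mu}$ is not a $\frac 12$-derivation unless it is zero. The paper's own proof makes the same slip: its displayed $a_\gm$ is defined casewise in $\af$, so invoking \cref{half-der-|f(G)|=3} there is illegitimate. A quick sanity check confirms the statement itself is false: for $\G=\Z/3\Z$ one has $V(f)\cong\mathfrak{sl}_2$, whose only $\frac 12$-derivations are scalars, so by \cref{princth} the only transposed Poisson structure is the zero one --- yet the lemma as stated would give a three-parameter family.
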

\begin{proof}
	Clearly, $\bigoplus_{i=0}^2 (V(f)_i,\cdot_{b_i})$ is commutative and associative. Fix $\bt\in\G$ and let $i\in\{0,1,2\}$ such that $\bt\in\gm_i+\G_0$. Denote by $\vf$ the multiplication by $e_\bt$ in $\bigoplus_{i=0}^2 (V(f)_i,\cdot_{b_i})$. Then 
	\begin{align*}
		\vf(e_\af)=
		\begin{cases}
			0, & \af\not\in\gm_i+\G_0,\\
			e_\af\cdot_{b_i}e_\bt, & \af\in\gm_i+\G_0.
		\end{cases}
	\end{align*}
Write $b_i=\sum_{\gm\in\G_0}(b_i)_\gm e_{\gm-\gm_i}$. Then $e_\af\cdot_{b_i}e_\bt=\sum_{\gm\in\G_0}(b_i)_\gm e_{\af+\bt+\gm-\gm_i}$. Since $\bt+\gm-\gm_i=(\bt-\gm_i)+\gm\in\G_0$, the latter sum can be rewritten as $\sum_{\gm\in\G_0}(b_i)_{\gm+\gm_i-\bt} e_{\af+\gm}$. Define
\begin{align*}
	a_\gm=
	\begin{cases}
		0, & \af\not\in\gm_i+\G_0,\\
		(b_i)_{\gm+\gm_i-\bt}, & \af\in\gm_i+\G_0.
	\end{cases}
\end{align*}
Then $\vf(e_\af)=\sum_{\gm\in\G_0}a_\gm e_{\af+\gm}$, so
$\vf\in\Dl(V(f))$ by \cref{half-der-|f(G)|=3}. The result now follows from \cref{glavlem}. 
\end{proof}

\begin{lem}\label{tp-is-direct-sum-|f(G)|=3}
	Let $|f(\G)|=3$. Then every transposed Poisson algebra structure on $V(f)$ is of the form $\bigoplus_{i=0}^2 (V(f)_i,\cdot_{b_i})$ for some $b_i\in V(f)_j$ with $i+j\equiv 0 \,\mathrm{mod}\,3$. 
\end{lem}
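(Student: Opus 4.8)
The plan is to mirror the structure of the $|f(\G)|\ge 4$ case treated in \cref{tp-mut-e_af-cdot-e_bt=e_af+bt-|f(G)|>=4}, but keeping careful track of the $\Z_3$-grading \cref{V(f)=direct-sum-V(f)_i}. Let $*$ be a transposed Poisson algebra structure on $V(f)$. By \cref{glavlem} the multiplication by any element of $V(f)$ in $(V(f),*)$ is a $\frac 12$-derivation, so by \cref{half-der-|f(G)|=3} each such operator shifts indices only by elements of $\G_0$. Concretely, for fixed $\bt\in\G$ there is a finitely supported family $\{a^\bt_\gm\}_{\gm\in\G_0}$ with $e_\af*e_\bt=\sum_{\gm\in\G_0}a^\bt_\gm(\af)\,e_{\af+\gm}$ — but in fact, since $\vf=e_\bt*(-)$ is a single $\frac 12$-derivation, the coefficients do not depend on $\af$; write $e_\af*e_\bt=\sum_{\gm\in\G_0}a^{\af,\bt}_\gm e_{\af+\gm}$ where for each fixed pair the family is finitely supported. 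The first consequence to extract is that $e_\af*e_\bt$ lies in the graded component $V(f)_k$ with $k\equiv i+j\bmod 3$, whenever $\af\in\gm_i+\G_0$ and $\bt\in\gm_j+\G_0$, because adding an element of $\G_0$ does not change the coset. In particular $V(f)_i*V(f)_j\sst V(f)_{i+j}$, so $*$ respects the $\Z_3$-grading.

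The second step is to pin down, for each $i$, the restriction of $*$ to $V(f)_i\times V(f)_i$. Here I would run the commutativity argument of \cref{tp-mut-e_af-cdot-e_bt=e_af+bt-|f(G)|>=4} verbatim but inside the coset $\gm_i+\G_0$. Fix $\af,\bt\in\gm_i+\G_0$. Both $e_\af*e_\bt$ and $e_\bt*e_\af$ are supported on $\af+\bt+\G_0$, so writing $e_\af*e_\bt=\sum_{\gm\in\G}a^\af_\gm e_{\bt+\gm}$ and $e_\bt*e_\af=\sum_{\gm\in\G}a^\bt_\gm e_{\af+\gm}$ (with $a^\af_\gm$ supported on $\gm\in\gm_i+\G_0$, since $\bt+\gm\in\af+\bt+\G_0$ forces $\gm\in\af+\G_0=\gm_i+\G_0$), commutativity gives $a^\af_\gm=a^\bt_{\bt-\af+\gm}$, exactly as in \cref{a^af_gm=a^bt_(bt-af+gm)}. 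Setting $b_i'=\sum_{\gm}a^{\gm_i}_\gm e_\gm$, which is supported on $\gm_i+\G_0$ hence lies in $V(f)_i$, the same telescoping computation as in the cited lemma shows $e_\af*e_\bt=e_\af\cdot b_i'\cdot e_\bt$ for all $\af,\bt\in\gm_i+\G_0$. This is a mutation by an element of $V(f)_i$; to match the statement's convention I would reindex and set $b_i=b_i'$, noting the statement wants $b_i\in V(f)_j$ with $i+j\equiv 0\bmod 3$ — and indeed there one is parametrizing the \emph{other} way, so I would instead define $b_i$ via $b_i=\sum_{\gm\in\G_0}(b_i)_\gm e_{\gm-\gm_i}$ and read off $(b_i)_\gm=a^{\gm_i}_{\gm-\gm_i}$; this lands $b_i$ in $V(f)_j$ with $j\equiv -i\bmod 3$, consistent with \cref{direct-sum-tp-|f(G)|=3}.

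The third and final step is to show that the cross terms $V(f)_i*V(f)_j$ with $i\ne j$ are already determined by the diagonal data, i.e. that $*$ is literally the direct sum $\bigoplus_{i=0}^2(V(f)_i,\cdot_{b_i})$ with the off-diagonal blocks \emph{zero}. This should follow from associativity of $*$: for $\af\in\gm_i+\G_0$ and $\bt\in\gm_j+\G_0$ with $i\ne j$, one can write $e_\bt$ (up to scalars and sums) using products within $V(f)_j$, or more directly invoke the idempotent-style argument — pick $\af'\in\gm_i+\G_0$ and use $(e_{\af}*e_{\af'})*e_\bt=e_\af*(e_{\af'}*e_\bt)$ together with the fact, from \cref{direct-sum-tp-|f(G)|=3} and the associativity constraints already imposed on a transposed Poisson structure, that a nonzero cross block would have to be compatible with both diagonal mutations simultaneously, which is impossible unless the whole coset $\gm_i+\G_0$ fails to appear, i.e. the block vanishes. (Alternatively, and more cleanly: since $(V(f),*)$ is commutative associative and $\Z_3$-graded with the degree-$0$ piece $(V(f)_0,\cdot_{b_0})$ acting on the whole algebra, and since $V(f)_1,V(f)_2$ are free rank-one modules over $(V(f)_0,\cdot_{b_0})$ after identifying them via $\cdot$, any $V(f)_0$-bilinear extension of $*$ to $V(f)_1\times V(f)_2\to V(f)_0$ is determined by a single structure constant; associativity pentagon-type identities then force that constant to be $0$.)

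The main obstacle I anticipate is precisely this last step: showing the off-diagonal blocks vanish. The diagonal analysis is a routine transcription of the $|f(\G)|\ge 4$ argument, but the paper's target form $\bigoplus_{i=0}^2(V(f)_i,\cdot_{b_i})$ asserts that $V(f)_1*V(f)_2=0$, $V(f)_0*V(f)_1=0$, etc., which is genuinely a constraint coming from \emph{associativity} of the commutative multiplication $*$ and not just from the $\frac 12$-derivation property. I would expect to handle it by choosing, for each coset, an explicit generator and writing down the associativity identity $(x*y)*z=x*(y*z)$ for suitable $x\in V(f)_0$, $y\in V(f)_1$, $z\in V(f)_2$, reducing everything to a finite linear system in the structure constants and checking it forces the cross terms to zero while leaving the diagonal mutations $\cdot_{b_i}$ free. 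It is worth double-checking whether some degenerate $\G_0$ (e.g. $\G_0$ trivial, so each $V(f)_i$ is one-dimensional and $(V(f)_i,\cdot_{b_i})$ could be zero) creates edge cases; if so, those are absorbed by allowing $b_i=0$.
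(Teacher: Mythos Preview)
Your proposal contains a genuine error in step~1 that propagates through the rest of the argument. You claim that $V(f)_i*V(f)_j\sst V(f)_{i+j}$, i.e.\ that $*$ respects the $\Z_3$-grading of $(V(f),\cdot)$. This is false. By \cref{half-der-|f(G)|=3}, multiplication by $e_\af$ is a $\frac12$-derivation shifting indices by elements of $\G_0$ \emph{only}; hence
\[
e_\af*e_\bt=\sum_{\gm\in\G_0} a^\af_\gm e_{\bt+\gm}\in V(f)_j,
\]
not $V(f)_{i+j}$. By commutativity the same product, viewed as $e_\bt*e_\af$, lies in $V(f)_i$. So in fact $e_\af*e_\bt\in V(f)_i\cap V(f)_j$. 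When $i\ne j$ this intersection is $\{0\}$, so the off-diagonal blocks vanish immediately---this is exactly what the paper does in its Case~1, using only commutativity and the form of $\frac12$-derivations. Associativity plays no role whatsoever in killing the cross terms, so your entire step~3 is unnecessary (and as written, it is also incomplete and vague).

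The same misidentification of supports infects step~2: you assert that $e_\af*e_\bt$ is supported on $\af+\bt+\G_0$, but for $\af,\bt\in\gm_i+\G_0$ with $i\ne 0$ this coset is $2\gm_i+\G_0\ne\gm_i+\G_0$, whereas the product actually lives in $\gm_i+\G_0$. Consequently your element $b_i'=\sum_\gm a^{\gm_i}_\gm e_\gm$ has support in $\G_0$ (since $a^{\gm_i}_\gm=0$ for $\gm\notin\G_0$), so $b_i'\in V(f)_0$, and then $e_\af\cdot b_i'\cdot e_\bt\in V(f)_{2i}$, which is the wrong component. The fix is precisely the shift the paper makes: set $b_i=\sum_{\gm\in\G_0}a^{\gm_i}_\gm e_{\gm-\gm_i}\in V(f)_j$ with $i+j\equiv 0\bmod 3$, after which the telescoping computation goes through verbatim. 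Once you correct the support claim in step~1, steps~2 and~3 collapse into the paper's short two-case argument based solely on commutativity.
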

\begin{proof}
	Let $*$ be a Poisson algebra structure on $V(f)$ and $\af,\bt\in\G$. By \cref{glavlem,half-der-|f(G)|=3} there are sequences $\{a^\af_\gm\}_{\gm\in\G_0}\sst\C$ and $ \{a^\bt_\gm\}_{\gm\in\G_0}\sst\C$ with only a finite number of non-zero elements such that
	\begin{align*}
		e_\af*e_\bt=\sum_{\gm\in\G_0} a^\af_\gm e_{\bt+\gm}\text{ and }e_\bt*e_\af=\sum_{\gm\in\G_0} a^\bt_\gm e_{\af+\gm}.
	\end{align*}

\textit{Case 1.} $\af-\bt\not\in\G_0$. Assume that $e_\af*e_\bt\ne 0$, so there exists $\gm\in\G_0$ with $a^\af_\gm\ne 0$. Since $e_\af*e_\bt=e_\bt*e_\af$, there also exists $\gm'\in\G_0$ such that $\bt+\gm=\af+\gm'$ and $a^\af_\gm=a^\bt_{\gm'}$. But then $\af-\bt=\gm-\gm'\in\G_0$, a contradiction. Thus, $e_\af*e_\bt=0$.

\textit{Case 2.} $\af-\bt\in\G_0$. Let $i\in\{0,1,2\}$ such that $\af,\bt\in\gm_i+\G_0$. Given $\gm,\gm'\in\G_0$ with $\bt+\gm=\af+\gm'$, the commutativity of $*$ implies $a^\af_{\gm}=a^\bt_{\gm'}$, so $a^\af_{\gm}=a^\bt_{\bt-\af+\gm}$ for all $\gm\in\G_0$. Define $b_i=\sum_{\gm\in\G_0}a^{\gm_i}_\gm e_{\gm-\gm_i}$. Then $b_i\in V(f)_j$ with $i+j\equiv 0 \,\mathrm{mod}\,3$ and
\begin{align*}
	e_\af*e_\bt=\sum_{\gm\in\G_0} a^\af_\gm e_{\bt+\gm}=\sum_{\gm\in\G_0} a^{\gm_i}_{(\gm_i-\af)+\gm} e_{\bt+\gm}=\sum_{\gm\in\G_0} a^{\gm_i}_{\gm} e_{\bt+\gm-(\gm_i-\af)}=\sum_{\gm\in\G_0} a^{\gm_i}_{\gm} e_{\bt+\af+\gm-\gm_i}=e_\af\cdot_{b_i}\cdot e_\bt.
\end{align*}
Thus, $(V(f),*)=\bigoplus_{i=0}^2 (V(f)_i,\cdot_{b_i})$.
\end{proof}

\begin{prop}\label{tp-3}
	Let $|f(\G)|=3$. Then transposed Poisson algebra structures on $V(f)$ are exactly of the form $\bigoplus_{i=0}^2 (V(f)_i,\cdot_{b_i})$ for some $b_i\in V(f)_j$ with $i+j\equiv 0 \,\mathrm{mod}\,3$.  
\end{prop}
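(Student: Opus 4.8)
The plan is to obtain Proposition~\ref{tp-3} as an immediate consequence of the two lemmas already established, namely \cref{direct-sum-tp-|f(G)|=3} and \cref{tp-is-direct-sum-|f(G)|=3}. The first lemma shows that every algebra of the form $\bigoplus_{i=0}^2 (V(f)_i,\cdot_{b_i})$, with $b_i\in V(f)_j$ and $i+j\equiv 0\,\mathrm{mod}\,3$, is a transposed Poisson algebra structure on $V(f)$; the second shows the converse, that any transposed Poisson algebra structure $*$ on $V(f)$ must be of this form. Putting the two inclusions together gives exactly the ``if and only if'' asserted in the proposition, so the proof is a single sentence citing both.

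The only thing worth double-checking is that the two lemmas are genuinely inverse to each other, i.e.\ that the parametrization is consistent: in \cref{tp-is-direct-sum-|f(G)|=3} the element $b_i$ is built from the structure constants of $*$ as $b_i=\sum_{\gm\in\G_0}a^{\gm_i}_\gm e_{\gm-\gm_i}$, and one verifies $b_i$ indeed lands in $V(f)_j$ with $i+j\equiv 0\,\mathrm{mod}\,3$ because $\gm-\gm_i\in -\gm_i+\G_0$ and $-\gm_i+\G_0=\gm_j+\G_0$ precisely when $i+j\equiv 0\,\mathrm{mod}\,3$; conversely \cref{direct-sum-tp-|f(G)|=3} takes an arbitrary such $b_i$ and produces a valid structure. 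So the ranges match and there is no gap.

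I do not anticipate any real obstacle here; all the substance has already been carried out in the preceding lemmas (the $\frac12$-derivation computations of \cref{half-der-|f(G)|=3}, the verification via \cref{glavlem}, and the commutativity/grading bookkeeping with the cosets $\gm_i+\G_0$). Accordingly the proof reads:

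\begin{proof}
	A consequence of \cref{direct-sum-tp-|f(G)|=3,tp-is-direct-sum-|f(G)|=3}.
\end{proof}
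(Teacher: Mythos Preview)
Your proof is correct and matches the paper's own proof verbatim: both simply cite \cref{direct-sum-tp-|f(G)|=3,tp-is-direct-sum-|f(G)|=3} as giving the two directions of the characterization. Your additional remarks confirming that the parametrizations in the two lemmas are mutually consistent are accurate and, while not strictly necessary, do no harm.
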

\begin{proof}
	A consequence of \cref{direct-sum-tp-|f(G)|=3,tp-is-direct-sum-|f(G)|=3}.
\end{proof}

\subsubsection{The case $|f(\G)|=2$}

%Unlike the case $|f(\G)|=3$, the index of $\G_0$ in $\G$ can be arbitrary. Let $I$ be the quotient group $\G/\G_0$, and for every $i\in I$ choose a representative $\gm_i\in\G$ of the coset $i$, where $\gm_0:=0$. So, $V(f)$, as a vector space, decomposes into the direct sum
%\begin{align}\label{V(f)=direct-sum-V(f)_i-|f(G)|=2}
%	V(f)=\bigoplus_{i\in I} V(f)_i,
%\end{align}
%where $V(f)_i$ is spanned by $\{e_{\gm_i+\gm}\}_{\gm\in\G_0}$. 
We again denote by $\cdot$ the multiplication $e_\af\cdot e_\bt=e_{\af+\bt}$ on $V(f)$. For any $b=\sum_{\gm\in\G}b_\gm e_\gm\in V(f)$ denote $b^0=\sum_{\gm\in\G_0}b_\gm e_\gm\in V(f)$.

\begin{lem}\label{e_af*a_bt-tp}
	Let $|f(\G)|=2$ and $b\in V(f)$. Then
	\begin{align}\label{e_af*a_bt-|f(G)|=2}
		e_\af*e_\bt=
		\begin{cases}
			e_\af\cdot_b e_\bt, & \af,\bt\in\G_0,\\
			e_\af\cdot_{b^0} e_\bt, & \af\in\G_0,\bt\not\in\G_0\text{ or }\af\not\in\G_0,\bt\in\G_0,\\
			0, & \af,\bt\not\in\G_0.
		\end{cases}
	\end{align}
defines a transposed Poisson algebra structure on $V(f)$.
\end{lem}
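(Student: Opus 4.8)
The plan is to invoke \cref{glavlem}: one must verify that $*$ is commutative and associative and that, for every $z\in V(f)$, the left multiplication $L_z\colon x\mapsto z*x$ belongs to $\Dl(V(f))$. The key device is to rewrite $*$ in a single closed form. Since $\G_0$ is a subgroup of $\G$ by \cref{G_0-subgroup}, the splitting $V(f)=V(f)_0\oplus V(f)_1$ with $V(f)_0=\spn\{e_\gm:\gm\in\G_0\}$ and $V(f)_1=\spn\{e_\gm:\gm\notin\G_0\}$ satisfies $V(f)_0\cdot V(f)_0\sst V(f)_0$, $V(f)_0\cdot V(f)_1\sst V(f)_1$ and $V(f)_1\cdot V(f)_0\sst V(f)_1$. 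Let $P$ be the projection of $V(f)$ onto $V(f)_0$ along $V(f)_1$; for $x\in V(f)$ write $x_0=P(x)$ and $x_1=x-P(x)$, note that $b^0=P(b)$, and set $b_1=b-b^0$. Unwinding the mutations $\cdot_b$ and $\cdot_{b^0}$ in the four cases of \eqref{e_af*a_bt-|f(G)|=2} shows that $*$ is the bilinear operation
\begin{align}\label{star-closed-form}
	x*y=x_0\cdot b^0\cdot y_0+x_1\cdot b^0\cdot y_0+x_0\cdot b^0\cdot y_1+x_0\cdot b_1\cdot y_0,
\end{align}
all products taken in $(V(f),\cdot)$. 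Commutativity of $*$ is then immediate from \eqref{star-closed-form} and the commutativity of $\cdot$. Establishing \eqref{star-closed-form} is the one point that requires a little care; everything after it is routine.

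For associativity, the grading shows that in \eqref{star-closed-form} the first summand lies in $V(f)_0$ and the other three in $V(f)_1$, so $(x*y)_0=x_0\cdot b^0\cdot y_0$ while $(x*y)_1$ is the sum of the remaining three. Substituting $x*y$ back into \eqref{star-closed-form} and using only the commutativity and associativity of $\cdot$, one obtains
\begin{align*}
	(x*y)*z=(b^0)^2\big(x_0 y_0 z_0+x_1 y_0 z_0+x_0 y_1 z_0+x_0 y_0 z_1\big)+2\,b^0 b_1\, x_0 y_0 z_0,
\end{align*}
which is symmetric in $x,y,z$ (the bracketed term being ``replace exactly one of the three factors by its $V(f)_1$-component''). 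Hence $(x*y)*z=(y*z)*x$, and commutativity of $*$ turns the right-hand side into $x*(y*z)$.

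It remains to show $L_z\in\Dl(V(f))$ for all $z$. Since $\Dl(V(f))$ is a linear subspace of $\mathrm{End}(V(f))$ and $z\mapsto L_z$ is linear, it suffices to treat $z=e_\zeta$. From \eqref{star-closed-form}, $L_{e_\zeta}(y)=e_\zeta\cdot b^0\cdot y+e_\zeta\cdot b_1\cdot P(y)$ when $\zeta\in\G_0$, and $L_{e_\zeta}(y)=e_\zeta\cdot b^0\cdot P(y)$ when $\zeta\notin\G_0$. Expanding $e_\zeta\cdot b^0$ and $e_\zeta\cdot b_1$ in the basis (finite sums, as $b$ has finite support) and using that $e_\zeta\cdot b^0$ is supported on $\G_0$ if $\zeta\in\G_0$ and on $\G\setminus\G_0$ otherwise, while $e_\zeta\cdot b_1$ is supported on $\G\setminus\G_0$, one writes $L_{e_\zeta}$ as a finite $\C$-linear combination of maps of the form $e_\af\mapsto e_{\af+\dl}$ with $\dl\in\G_0$ and maps that send $e_\af$ to $e_{\af+\eta}$ for $\af\in\G_0$ and to $0$ for $\af\notin\G_0$, with $\eta\notin\G_0$. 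By \cref{vf(e_af)=ae_(af+gm)-gm-in-G_0} and \cref{vf(e_af)=ae_(af+gm)-|f(G)|=2} these are $\frac12$-derivations, so $L_{e_\zeta}\in\Dl(V(f))$. An application of \cref{glavlem} completes the proof.
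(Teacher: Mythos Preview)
Your proof is correct and takes a genuinely different route from the paper. The paper verifies associativity by a straightforward exhaustive case analysis on which of $\af,\bt,\gm$ lie in $\G_0$ (eight cases, several reduced to one another by commutativity), and in fact leaves the compatibility condition \cref{Trans-Leibniz} implicit. You instead repackage $*$ into the single bilinear expression \eqref{star-closed-form} using the projection $P$ onto $V(f)_0$; associativity then comes from one symmetric computation of $(x*y)*z$, and you verify the compatibility condition explicitly by exhibiting each $L_{e_\zeta}$ as a finite linear combination of the basic $\tfrac12$-derivations from \cref{vf(e_af)=ae_(af+gm)-gm-in-G_0,vf(e_af)=ae_(af+gm)-|f(G)|=2}. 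The paper's argument is more elementary and self-contained at the cost of length; your approach is cleaner and more complete, though it requires the observation (which you make) that $V(f)_0$ is a $\cdot$-subalgebra with $V(f)_1$ a two-sided $V(f)_0$-module, so that the splitting $(x*y)_0=x_0 b^0 y_0$ is well-defined without ever needing to know where $V(f)_1\cdot V(f)_1$ lands.
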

\begin{proof}
	It is obvious that $*$ is commutative. Let us check associativity of $*$. Take arbitrary $\af,\bt,\gm\in\G$. 
	
	\textit{Case 1.} $\af,\bt,\gm\in\G_0$. Write $b=b^0+b^1$. Then
	\begin{align*}
		(e_\af*e_\bt)*e_\gm&=(e_\af\cdot_b e_\bt)*e_\gm=(e_\af\cdot_{b^0} e_\bt)*e_\gm+(e_\af\cdot_{b^1} e_\bt)*e_\gm\\
		&=(e_\af\cdot_{b^0} e_\bt)\cdot_b e_\gm+(e_\af\cdot_{b^1} e_\bt)\cdot_{b^0} e_\gm=e_\af\cdot b^0\cdot e_\bt\cdot b\cdot e_\gm+e_\af\cdot b^1\cdot e_\bt\cdot b^0\cdot e_\gm\\
		&=e_{\af+\bt+\gm}\cdot b^0\cdot (b+b^1).
	\end{align*}
	 and
	 \begin{align*}
	 	e_\af*(e_\bt*e_\gm)&=e_\af*(e_\bt\cdot_b e_\gm)=e_\af*(e_\bt\cdot_{b^0} e_\gm)+e_\af*(e_\bt\cdot_{b^1} e_\gm)\\
	 	&=e_\af\cdot_b(e_\bt\cdot_{b^0} e_\gm)+e_\af\cdot_{b^0}(e_\bt\cdot_{b^1} e_\gm)=e_\af\cdot b\cdot e_\bt\cdot b^0\cdot e_\gm+e_\af\cdot b^0\cdot e_\bt\cdot b^1\cdot e_\gm\\
        &=e_{\af+\bt+\gm}\cdot b^0\cdot (b+b^1).
	 \end{align*}
 Thus, $(e_\af*e_\bt)*e_\gm=e_\af*(e_\bt*e_\gm)$.
 
 \textit{Case 2.} $\af,\bt\in\G_0$ and $\gm\not\in\G_0$. Then
 \begin{align*}
 	(e_\af*e_\bt)*e_\gm&=(e_\af\cdot_b e_\bt)*e_\gm=(e_\af\cdot_{b^0} e_\bt)*e_\gm+(e_\af\cdot_{b^1} e_\bt)*e_\gm\\
 	&=(e_\af\cdot_{b^0} e_\bt)*e_\gm=(e_\af\cdot_{b^0} e_\bt)\cdot_{b^0} e_\gm.
 \end{align*}
 and
 \begin{align*}
 	e_\af*(e_\bt*e_\gm)=e_\af*(e_\bt\cdot_{b^0} e_\gm)=e_\af\cdot_{b^0}(e_\bt\cdot_{b^0} e_\gm).
 \end{align*}
Since $\cdot_{b^0}$ is associative, we have $(e_\af*e_\bt)*e_\gm=e_\af*(e_\bt*e_\gm)$.

\textit{Case 3.} $\af,\gm\in\G_0$ and $\bt\not\in\G_0$. Then $(e_\af*e_\bt)*e_\gm=e_\gm*(e_\af*e_\bt)=(e_\gm*e_\af)*e_\bt$ by commutativity of $*$ and the result of Case 2. Similarly,
$e_\af*(e_\bt*e_\gm)=e_\af*(e_\gm*e_\bt)=(e_\af*e_\gm)*e_\bt=(e_\gm*e_\af)*e_\bt$. Thus, $(e_\af*e_\bt)*e_\gm=e_\af*(e_\bt*e_\gm)$.

\textit{Case 4.} $\bt,\gm\in\G_0$ and $\af\not\in\G_0$. Then $(e_\af*e_\bt)*e_\gm=e_\gm*(e_\af*e_\bt)=e_\gm*(e_\bt*e_\af)$ and $e_\af*(e_\bt*e_\gm)=(e_\bt*e_\gm)*e_\af=(e_\gm*e_\bt)*e_\af$, so we are in the conditions of Case 2.

\textit{Case 5.} $\af,\bt\not\in\G_0$ and $\gm\in\G_0$. Then $e_\af*e_\bt=0$, so $(e_\af*e_\bt)*e_\gm=0$. Furthermore, $e_\af*(e_\bt*e_\gm)=e_\af*(e_\bt\cdot_{b^0} e_\gm)=0$.

\textit{Case 6.} $\af,\gm\not\in\G_0$ and $\bt\in\G_0$. This case reduces to Case 5 the same way as Case 3 reduces to Case 2.

\textit{Case 7.} $\bt,\gm\not\in\G_0$ and $\af\in\G_0$. This case reduces to Case 5 the same way as Case 4 reduces to Case 2.

\textit{Case 8.} $\af,\bt,\gm\not\in\G_0$. Then $e_\af*e_\bt=e_\bt*e_\gm=0$, so $(e_\af*e_\bt)*e_\gm=0=e_\af*(e_\bt*e_\gm)$.
%	\begin{align*}
%		e_{\af+\bt+\dl}*e_\gm=
%		\begin{cases}
%			e_{\af+\bt+\dl}\cdot_b e_\gm, & \dl\in\G_0,\\
%			e_{\af+\bt+\dl}\cdot_{b^0} e_\gm, & \dl\not\in\G_0,
%		\end{cases}
%		=
%		\begin{cases}
%			\sum_{\ve\in\G} b_\ve e_{\af+\bt+\gm+\dl+\ve}, & \dl\in\G_0,\\
%			\sum_{\ve\in\G_0} b_\ve e_{\af+\bt+\gm+\dl+\ve}, & \dl\not\in\G_0,
%		\end{cases}
%	\end{align*}
%so 
%	\begin{align*}
%		(e_\af*e_\bt)*e_\gm&=\sum_{\dl\in\G} b_\dl e_{\af+\bt+\dl}*e_\gm=\sum_{\dl\in\G_0} b_\dl e_{\af+\bt+\dl}*e_\gm+\sum_{\dl\not\in\G_0} b_\dl e_{\af+\bt+\dl}*e_\gm\\
%		&=\sum_{\dl\in\G_0} b_\dl \left(\sum_{\ve\in\G} b_\ve e_{\af+\bt+\gm+\dl+\ve}\right)+\sum_{\dl\not\in\G_0} b_\dl \left(\sum_{\ve\in\G_0} b_\ve e_{\af+\bt+\gm+\dl+\ve}\right)
%	\end{align*}
\end{proof}

\begin{lem}\label{tp-|f(G)|=2}
	Let $|f(\G)|=2$. Then every transposed Poisson algebra structure on $V(f)$ is of the form \cref{e_af*a_bt-|f(G)|=2}.
\end{lem}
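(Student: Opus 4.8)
The plan is to let $*$ be an arbitrary transposed Poisson algebra structure on $V(f)$ and reconstruct it in the form \cref{e_af*a_bt-|f(G)|=2}. By \cref{glavlem}, for each $\bt\in\G$ the operator $e_\af\mapsto e_\bt* e_\af$ is a $\frac 12$-derivation of $V(f)$, so \cref{half-der-|f(G)|=2} applies: there is a finitely supported family $\{a^\bt_\gm\}_{\gm\in\G}\sst\C$ with
\begin{align*}
	e_\bt*e_\af=
	\begin{cases}
		\sum_{\gm\in\G}a^\bt_\gm e_{\af+\gm}, & \af\in\G_0,\\
		\sum_{\gm\in\G_0}a^\bt_\gm e_{\af+\gm}, & \af\not\in\G_0.
	\end{cases}
\end{align*}
First I would exploit commutativity, $e_\af*e_\bt=e_\bt*e_\af$, exactly as in the proofs of \cref{tp-mut-e_af-cdot-e_bt=e_af+bt-|f(G)|>=4,tp-is-direct-sum-|f(G)|=3}: matching the coefficient of a common basis vector $e_{\af+\gm}=e_{\bt+\gm'}$ forces $a^\af_\gm=a^\bt_{\gm'}$ whenever $\gm-\gm'=\bt-\af$, and one deduces $a^\af_\gm=b_{\gm-\af}$ where $b_\gm:=a^0_\gm$, i.e. $e_\af*e_\bt=e_\af\cdot_b e_\bt$, \emph{provided} both $\af,\bt$ lie in the range where the full sum $\sum_{\gm\in\G}$ is available.

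The case analysis is then dictated by the two cases in \cref{half-der-|f(G)|=2}. If $\af,\bt\in\G_0$, the above goes through verbatim and gives $e_\af*e_\bt=e_\af\cdot_b e_\bt$ with $b=\sum_{\gm\in\G}b_\gm e_\gm$. If $\af,\bt\not\in\G_0$, then in $e_\bt*e_\af$ only $\gm\in\G_0$ contribute, so $e_\af*e_\bt\in\bigoplus_{\gm\in\G_0}\C e_{\af+\bt+\gm}$; but $\af+\bt\in\G_0$ here (as $|\G/\G_0|=2$), so $e_\af*e_\bt\in V(f)_0$, and I need to show this is actually $0$. For this I would compare $a^\af_\gm$ (defined from $e_\af*e_\bt$ with $\bt\in\G_0$, where it equals $b_{\gm-\af}$) against the restricted expansion of $e_\af*e_\bt$ with $\bt\not\in\G_0$, showing the only surviving coefficients are the $b_\gm$ with $\gm-\af\in\G_0$, i.e. $\gm\not\in\G_0$; combined with the constraint that these must lie in $\G_0$, everything cancels. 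The most delicate step is the mixed case $\af\in\G_0$, $\bt\not\in\G_0$ (or vice versa): here I use that $e_\bt*e_\af$ with $\af\in\G_0$ gives the full sum $\sum_{\gm\in\G}a^\bt_\gm e_{\af+\gm}$, while $e_\af*e_\bt$ with $\bt\not\in\G_0$ only gives $\sum_{\gm\in\G_0}a^\af_\gm e_{\bt+\gm}$; commutativity forces $a^\bt_\gm=0$ for $\gm\not\in\G_0$ and $a^\bt_\gm=b_{\gm-\bt}$ for $\gm\in\G_0$, so that $e_\af*e_\bt=\sum_{\gm\in\G_0}b_{\gm-\bt}e_{\af+\gm}=e_\af\cdot_{b^0}e_\bt$, because only the $\G_0$-part of $b$ enters.

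I expect the main obstacle to be purely bookkeeping: keeping track of which cosets $\af$, $\bt$, $\af+\bt$, $\af+\gm$ lie in, and correctly propagating the index shifts $a^\af_\gm=b_{\gm-\af}$ through the two different summation ranges so that the three cases of \cref{e_af*a_bt-|f(G)|=2} fall out cleanly. No genuinely new idea is needed beyond the commutativity trick already used twice above, together with \cref{half-der-|f(G)|=2}; associativity of $*$ need not be invoked in this direction, since it was already verified for the right-hand side of \cref{e_af*a_bt-|f(G)|=2} in \cref{e_af*a_bt-tp}. Finally I would remark that the two parameters $b$ and $b^0$ appearing in \cref{e_af*a_bt-|f(G)|=2} are consistent — $b^0$ is literally the $\G_0$-truncation of the single element $b$ obtained from $e_\af*e_\bt$ on $\G_0\times\G_0$ — so the structure is determined by the single vector $b\in V(f)$, completing the proof together with \cref{e_af*a_bt-tp}.
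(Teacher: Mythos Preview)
Your overall strategy matches the paper's proof exactly: invoke \cref{glavlem} and \cref{half-der-|f(G)|=2} to write multiplication by each $e_\bt$ as a $\frac 12$-derivation, then use commutativity to pin down the coefficients $a^\bt_\gm$ in four cases. However, two of your intermediate claims are wrong, and they are precisely the bookkeeping hazards you anticipated.

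First, the assertion $|\G/\G_0|=2$ is unjustified and generally false. In the case $|f(\G)|=2$ there is no analogue of \cite[Lemma~4.4]{Yu97}: the induced map $\bar f:\G/\G_0\to\{0,c\}$ sending every nonzero coset to $c$ satisfies \cref{(f(af+bt)-f(af)-f(bt))(f(af)-f(bt))=0} automatically (the second factor vanishes), so $\G/\G_0$ can be any group. Thus for $\af,\bt\not\in\G_0$ you cannot conclude $\af+\bt\in\G_0$. Fortunately this observation is not needed for the argument.

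Second, and more seriously, your mixed-case conclusion has the cosets swapped. With $\af\in\G_0$, $\bt\not\in\G_0$, the support of $e_\af*e_\bt=\sum_{\gm\in\G_0}a^\af_\gm e_{\bt+\gm}$ lies in $\bt+\G_0$, so comparing with $e_\bt*e_\af=\sum_{\gm\in\G}a^\bt_\gm e_{\af+\gm}$ forces $a^\bt_\gm=0$ for $\gm\not\in\bt+\G_0$ (not $\gm\not\in\G_0$ as you wrote) and $a^\bt_\gm=b_{\gm-\bt}$ for $\gm\in\bt+\G_0$; note that then $\gm-\bt\in\G_0$, which is why only $b^0$ appears in $e_\af\cdot_{b^0}e_\bt$. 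This corrected formula is exactly what drives Case~4: for $\af\not\in\G_0$ the sum $e_\af*e_\bt=\sum_{\gm\in\G_0}a^\af_\gm e_{\bt+\gm}$ vanishes because $\gm\in\G_0$ implies $\gm\not\in\af+\G_0$, hence $a^\af_\gm=0$. Your stated version (\,$a^\bt_\gm=b_{\gm-\bt}$ for $\gm\in\G_0$\,) would instead give $\sum_{\gm\in\G_0}b_{\gm-\af}e_{\bt+\gm}$, which is not zero in general and does not equal $e_\af\cdot_{b^0}e_\bt$ either. Once you fix this swap, your plan goes through and coincides with the paper's proof.
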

\begin{proof}
	By \cref{glavlem,half-der-|f(G)|=2} there are sequences $\{a^\af_\gm\}_{\gm\in\G}\sst\C$ and $ \{a^\bt_\gm\}_{\gm\in\G}\sst\C$ with only a finite number of non-zero elements such that
	\begin{align*}
		e_\af*e_\bt=
		\begin{cases}
			\sum_{\gm\in\G} a^\af_\gm e_{\bt+\gm}, & \bt\in\G_0,\\
			\sum_{\gm\in\G_0} a^\af_\gm e_{\bt+\gm}, & \bt\not\in\G_0,
		\end{cases}
	\text{ and }
	e_\bt*e_\af=
	\begin{cases}
		\sum_{\gm\in\G} a^\bt_\gm e_{\af+\gm}, & \af\in\G_0,\\
		\sum_{\gm\in\G_0} a^\bt_\gm e_{\af+\gm}, & \af\not\in\G_0.
	\end{cases}
	\end{align*}
	
	\textit{Case 1.} $\af,\bt\in\G_0$. By commutativity of $*$ we have $\sum_{\gm\in\G} a^\af_\gm e_{\bt+\gm}=\sum_{\gm\in\G} a^\bt_\gm e_{\af+\gm}$, whence $a^\af_{\gm}=a^\bt_{\bt-\af+\gm}$. In particular, $a^\af_{\gm}=a^0_{\gm-\af}$. So, as in \cref{tp-mut-e_af-cdot-e_bt=e_af+bt-|f(G)|>=4}, we set $b_\gm:=a^0_\gm$ and
	\begin{align*}
		e_\af*e_\bt=\sum_{\gm\in\G} b_{\gm-\af} e_{\bt+\gm}=\sum_{\gm\in\G} b_\gm e_{\af+\bt+\gm}=e_\af\cdot\left(\sum_{\gm\in\G} b_\gm e_\gm\right)\cdot e_\bt=e_\af\cdot_b e_\bt,
	\end{align*}
where $b=\sum_{\gm\in\G} b_\gm e_\gm$.

	\textit{Case 2.} $\af\in\G_0$, $\bt\not\in\G_0$. Then $e_\af*e_\bt=\sum_{\gm\in\G_0} a^\af_\gm e_{\bt+\gm}$. As in Case 1 we have $a^\af_{\gm}=a^0_{\gm-\af}=b_{\gm-\af}$, so
	\begin{align*}
		e_\af*e_\bt=\sum_{\gm\in\G_0} b_{\gm-\af} e_{\bt+\gm}=\sum_{\gm\in\G_0} b_\gm e_{\af+\bt+\gm}=e_\af\cdot\left(\sum_{\gm\in\G_0} b_\gm e_\gm\right)\cdot e_\bt=e_\af\cdot_{b^0} e_\bt.
	\end{align*}
	Observe that $\bt+\gm\in\bt+\G_0$ for all $\gm\in\G_0$. Since $e_\bt*e_\af=\sum_{\gm\in\G} a^\bt_\gm e_{\af+\gm}$, it follows from $e_\af*e_\bt=e_\bt*e_\af$ that $a^\bt_\gm=0$ whenever $\af+\gm\not\in \bt+\G_0$. But $\af\in\G_0$, so the latter is equivalent to $\gm\not\in\bt+\G_0$. And if $\gm\in\bt+\G_0$, then $a^\bt_\gm=a^\af_{\gm-\bt+\af}=b_{\gm-\bt}$. Thus,
	\begin{align}\label{a^bt_af=0-or-b_gm-bt}
		a^\bt_\gm=
		\begin{cases}
			0, & \gm\not\in \bt+\G_0,\\
			b_{\gm-\bt}, &\gm\in \bt+\G_0.
		\end{cases}
	\end{align}
	
	\textit{Case 3.} $\af\not\in\G_0$, $\bt\in\G_0$. By the result of Case 2 we have $e_\af*e_\bt=e_\bt*e_\af=e_\af\cdot_{b^0} e_\bt$. 
	
	\textit{Case 4.} $\af,\bt\not\in\G_0$. Then $e_\af*e_\bt=\sum_{\gm\in\G_0} a^\af_\gm e_{\bt+\gm}$. But $\gm\in\G_0$ implies $\gm\not\in \af+\G_0$ , so $a^\af_\gm=0$ for all $\gm\in\G_0$ by \cref{a^bt_af=0-or-b_gm-bt}. Thus, $e_\af*e_\bt=0$.
\end{proof}

\begin{prop}\label{tp-2}
	Let $|f(\G)|=2$. Then transposed Poisson algebra structures on $V(f)$ are exactly of the form \cref{e_af*a_bt-|f(G)|=2} for some $b\in V(f)$.  
\end{prop}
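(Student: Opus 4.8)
The plan is to prove the two inclusions separately, as was done for $|f(\G)|\ge 4$ and $|f(\G)|=3$; formally the statement will then be a consequence of \cref{e_af*a_bt-tp,tp-|f(G)|=2}, so the substance lies in proving those two lemmas.

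For the inclusion ``every operation of the form \cref{e_af*a_bt-|f(G)|=2} is a transposed Poisson structure'' there are three things to establish. Commutativity is immediate from the symmetry of the piecewise formula. The compatibility relation \cref{Trans-Leibniz} I would deduce from \cref{glavlem}: for each fixed $\bt$, left multiplication $e_\af\mapsto e_\bt * e_\af$ sends, after a shift of indices, $e_\af$ to an element of the form $\sum_{\gm\in\G} a_\gm e_{\af+\gm}$ when $\af\in\G_0$ and of the form $\sum_{\gm\in\G_0} a_\gm e_{\af+\gm}$ when $\af\notin\G_0$ (the $a_\gm$ being either a coefficient of $b$ or zero), which is precisely the template of \cref{half-der-|f(G)|=2}, so this operator is a $\frac 12$-derivation. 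The third point, associativity of $*$, is the laborious one and I expect it to be the main obstacle: it does \emph{not} follow from associativity of a single mutation, because $*$ uses the full element $b$ only when \emph{both} arguments lie in $\G_0$ and its truncation $b^0$ otherwise, so the ``$b$'' and ``$b^0$'' contributions must be reconciled by hand. I would split on how many of $\af,\bt,\gm$ lie in $\G_0$: if at least two lie outside $\G_0$, both parenthesizations are $0$; if exactly one lies outside $\G_0$, every product that occurs is a $\cdot_{b^0}$-product, so associativity of the mutation $\cdot_{b^0}$ closes the case; and the remaining subcase $\af,\bt,\gm\in\G_0$ is the crux --- here one writes $b=b^0+b^1$ with $b^1$ supported off $\G_0$, expands both $(e_\af * e_\bt)* e_\gm$ and $e_\af *(e_\bt * e_\gm)$, and checks that the mixed $b^0/b^1$ terms assemble to the same element, which is exactly where the asymmetric appearance of $b$ versus $b^0$ gets tested.

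For the reverse inclusion, let $*$ be an arbitrary transposed Poisson structure. By \cref{glavlem} each left multiplication is a $\frac 12$-derivation, so \cref{half-der-|f(G)|=2} furnishes finitely supported families $a^\af$ with $e_\af * e_\bt=\sum_\gm a^\af_\gm e_{\bt+\gm}$, the sum running over $\G$ if $\bt\in\G_0$ and over $\G_0$ if $\bt\notin\G_0$. Commutativity of $*$ is then used to pin down $b$: for $\af,\bt\in\G_0$, comparing $e_\af * e_\bt$ with $e_\bt * e_\af$ forces $a^\af_\gm=a^0_{\gm-\af}$, so $b:=\sum_\gm a^0_\gm e_\gm$ gives $e_\af * e_\bt=e_\af\cdot_b e_\bt$ exactly as in \cref{tp-mut-e_af-cdot-e_bt=e_af+bt-|f(G)|>=4}. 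When exactly one of $\af,\bt$ lies in $\G_0$, the truncated summation range yields $e_\af * e_\bt=e_\af\cdot_{b^0} e_\bt$, and the delicate consistency point is that comparing the supports of $e_\af * e_\bt$ and $e_\bt * e_\af$ forces the ``long'' expansion to have all coefficients outside the coset $\bt+\G_0$ equal to zero; equivalently, each family $a^\af$ with $\af\notin\G_0$ is supported on $\af+\G_0$. Finally, for $\af,\bt\notin\G_0$ the product $e_\af * e_\bt$ is supported on $\{e_{\bt+\gm}:\gm\in\G_0\}$, but the coefficients $a^\af_\gm$ with $\gm\in\G_0$ vanish, since $\af+\G_0$ is disjoint from $\G_0$, so $e_\af * e_\bt=0$. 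Combining the two inclusions yields the proposition.
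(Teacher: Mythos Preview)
Your plan coincides with the paper's: the proposition is the conjunction of \cref{e_af*a_bt-tp} and \cref{tp-|f(G)|=2}, and both your associativity case split and your commutativity-based extraction of $b$ in the reverse direction match the paper step for step (your verification of the $\tfrac12$-derivation property via \cref{half-der-|f(G)|=2} is in fact more explicit than the paper's write-up of \cref{e_af*a_bt-tp}).

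One small slip to fix: in the associativity subcase where exactly one of $\af,\bt,\gm$ lies outside $\G_0$, it is not literally true that ``every product that occurs is a $\cdot_{b^0}$-product''. When the two $\G_0$-elements are multiplied first, that inner product is $e_\af\cdot_b e_\bt$, not $e_\af\cdot_{b^0} e_\bt$. What saves the argument is that the $b^1$-part of this inner product is supported outside $\G_0$, so its subsequent $*$-product with the third element (also outside $\G_0$) vanishes; hence only the $\cdot_{b^0}$-part survives and then associativity of $\cdot_{b^0}$ does close the case. This is exactly the same $b=b^0+b^1$ trick you already invoke for the all-$\G_0$ subcase, so the repair is immediate.
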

\begin{proof}
	A consequence of \cref{e_af*a_bt-tp,tp-|f(G)|=2}.
\end{proof}

\medskip

%	\newpage

\end{document}